\newtheorem{Lemma}{Lemma}
\newtheorem{Proposition}[Lemma]{Proposition}
\newtheorem{Corollary}[Lemma]{Corollary}
\newtheorem{TheoremIntro}{Theorem}
\theoremstyle{definition}
\theoremstyle{remark}
\newtheorem{Remark}[Lemma]{Remark}
\newskip\originalparindent
\newtcolorbox{note}{
        breakable, 
        enhanced, 
        colback=red!5!white, 
        colframe=red!75!black,
        before upper={\parindent\originalparindent}
        }
\newcommand\NN{\mathbb{N}}
\newcommand\kk{\Bbbk}
\renewcommand\O{\mathcal{O}}
\DeclareMathOperator\soc{soc}
\let\top\relax
\DeclareMathOperator\top{top}
\DeclareMathOperator\rad{rad}
\DeclareMathOperator\GL{GL}
\DeclareMathOperator\Rep{Rep}
\newcommand\newterm[1]{\textbf{\itshape\color{blue!50!black}#1}}
\DeclareMathOperator\Ext{Ext}
\let\hom\relax
\DeclareMathOperator\hom{Hom}
\newcommand\A{\mathcal{A}}
\newlist{thmlist}{enumerate}{1}
\setlist[thmlist]{
        nolistsep,
        ref={\mdseries\textup{(\emph{\roman*})}},
        label={\mdseries\textup{(\emph{\roman*})}},
        }
\newcommand\thmitem[1]{\textup{(\emph{\romannumeral #1})}}
\newlist{tfae}{enumerate}{1}
\setlist[tfae]{
        nolistsep,
        ref={\mdseries\textup{(\emph{\alph*})}},
        label={\mdseries\textup{(\emph{\alph*})}},
        }
\newcommand\tfaeitem[1]{{\textup{(\emph{\@alph #1})}}}
\setlist[itemize,2]{label=$\triangleright$}
\newlength{\@thlabel@width}
\newcommand{\fixhspace}{%
        \settowidth{\@thlabel@width}{\hskip0.5em\itshape1.}%
        \sbox{\@labels}{%
                \unhbox\@labels%
                \hspace{\dimexpr-\leftmargin+\labelsep+\@thlabel@width-\itemindent}
                }
        }
\newcommand\claim[2][.8]{%
  \begin{minipage}{#1\displaywidth}%
  \itshape
  #2
  \end{minipage}%
}
\newcommand\N{\mathcal{N}}
\DeclarePairedDelimiter\abs{\lvert}{\rvert}
\newcommand\rk[1]{\abs{#1}}
\tikzset{
  quiver vertex/.style={radius=2pt},
  quiver arrow/.style={
        thick, shorten <=4pt, shorten >=4pt, -{Stealth[#1]},
        },
  small quiver arrow/.style={
        thick, shorten <=2pt, shorten >=2pt, -{Stealth[length=4pt, width=4pt]},
        },
  snake arrow/.style={
    -{Stealth[#1]},
    decorate, thick, shorten <=4pt, shorten >=4pt,
    decoration={snake, amplitude=0.5mm, segment length=1mm, post length=5mm,
                pre length=5mm}},
}
\title{A simple homological characterization of string algebras of
finite representation type}
\author{Mariano Suárez-Álvarez}
\date{May 6, 2021}
\begin{document}

\maketitle

\begin{abstract}
We prove that among the finite dimensional algebras of finite
representation type those that are string algebras are precisely the
ones that have the property that the middle term of an arbitrary
extension of indecomposable modules has at most two direct factors. On
the other hand, we show that non-domestic string algebras are very far
from having that property.
\end{abstract}

\vspace{2pc}

\hspace*{\stretch{1}}
\textit{In memory of Andrzej Skowro\'nski}

\vspace{1pc}

Throughout this paper $\kk$ denotes an algebraically closed field,
algebras are finite dimensional $\kk$-algebras and modules over them
finitely generated.

\medskip

It is well-known that the middle term of every almost split short
exact sequence in the category of modules over a string algebra has at
most two direct factors. In this note I~will show that, in fact, this
is true of \emph{all} extensions of indecomposable modules provided
the algebra has finite representation type and, moreover, that this
class of algebras is completely characterized by this property.
Precisely, my main result is:

\begin{TheoremIntro}\label{thm:main}
A finite dimensional algebra of finite representation type is a string
algebra if and only if the middle term of \emph{every} extension
between its indecomposable modules has at most two direct factors.
\end{TheoremIntro}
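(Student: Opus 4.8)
My plan is to prove the two implications separately. Throughout I write $A=\kk Q/I$ with $I$ admissible, and I use that both the hypothesis and the conclusion are preserved by the duality $D=\hom_\kk(\place,\kk)$: it sends $A$-modules to $A^{\mathrm{op}}$-modules, preserves indecomposability and the number of indecomposable summands, carries short exact sequences to short exact sequences, and interchanges $A$ with $A^{\mathrm{op}}$, which is a string algebra exactly when $A$ is. This halves the case analysis in the second implication.

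For the implication that a representation-finite string algebra $A$ has the stated property, I would argue combinatorially. Since $A$ is representation-finite there are no band modules, so every indecomposable is a string module $M(w)$ and, consequently, the middle term $E$ of any $0\to M(u)\to E\to M(w)\to 0$ is again a direct sum of string modules. A split extension gives $M(u)\oplus M(w)$, so I may assume the extension nonsplit; then $M(w)$ is non-projective and the almost split sequence $0\to\tau M(w)\to B\to M(w)\to 0$ is available, with $B$ a sum of at most two strings by the classical fact recalled in the introduction. I would realize the given extension as the pushout of this almost split sequence along a necessarily nonzero map $\tau M(w)\to M(u)$, and then show that gluing the single string $M(u)$ onto the at most two strings of $B$ along the image of the indecomposable $\tau M(w)$ cannot produce a third summand. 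This is exactly where finiteness of representation type is essential: it keeps the strings short enough that the two potential overlap and arrow sites of $u$ and $w$ cannot be activated at once, whereas simultaneous activation is precisely the band-producing phenomenon responsible, in infinite type, for the failure described in the abstract. I expect this to be the main obstacle, and I would resolve it by a direct inspection of the overlap and arrow extensions rather than by any soft argument; in particular semicontinuity is of no help, since the number of indecomposable summands is not monotone along degenerations — the most degenerate middle term $M(u)\oplus M(w)$ can have fewer summands than a nearby nonsplit one.

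For the converse I argue by contraposition: assuming $A$ representation-finite but not a string algebra, I produce indecomposables $X,Y$ and an extension $0\to X\to E\to Y\to 0$ with at least three summands, often in fact an almost split sequence. If some vertex $v$ emits three arrows $\alpha_i\colon v\to w_i$, let $E=M(\alpha_1)\oplus M(\alpha_2)\oplus M(\alpha_3)$ be the sum of the three length-two modules and let $g\colon E\to S_v$ be the sum of the three top projections; its kernel $X$ is supported on $\{v,w_1,w_2,w_3\}$ with $X_v=\kk^2$ and the $\alpha_i$ acting by pairwise independent functionals, so $X$ is indecomposable and is a genuine $A$-module because $\rad^2 X=0$ forces $IX=0$. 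This exhibits the nonsplit $0\to X\to M(\alpha_1)\oplus M(\alpha_2)\oplus M(\alpha_3)\to S_v\to 0$ with three summands (for $A=\kk D_4$ this is the almost split sequence ending at the central simple); dually three arrows may end at a vertex. If instead $A$ fails to be a string algebra only because its defining ideal cannot be taken monomial, then $A$ is special biserial with a genuine binomial relation, hence has an indecomposable projective–injective $P$ that is not uniserial; for such $P$ the module $\rad P/\soc P$ is decomposable and the almost split sequence $0\to\rad P\to P\oplus(\rad P/\soc P)\to P/\soc P\to 0$ has at least three summands. The remaining failure — an arrow admitting two nonzero continuations — I would treat by an analogous local module supported on the offending path.

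The two places I expect real work, and would treat as the technical core, are the pushout analysis of the first implication and the exhaustiveness of the case analysis of the second: namely, that every way of failing the string axioms reduces, after the duality above and after passing to a suitable convenient subquotient, to one of the configurations handled, with the witnessing indecomposables actually present in the representation-finite module category. Everything else — the indecomposability of the hand-built modules and the verification that the relations annihilate them for degree reasons — is routine.
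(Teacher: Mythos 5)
Your proposal has genuine gaps in both directions, and the forward direction rests on a false lemma. You claim that every nonsplit extension $0\to M(u)\to E\to M(w)\to 0$ can be realized as the pushout of the almost split sequence $0\to\tau M(w)\to B\to M(w)\to 0$ along some map $\tau M(w)\to M(u)$. The universal property of almost split sequences points the other way: since $E\to M(w)$ is not a split epimorphism it factors through $B\to M(w)$, which exhibits the almost split sequence as a pushout of \emph{your} extension along some map $M(u)\to\tau M(w)$ --- not conversely. This fails already for string algebras of finite representation type: for $A=\kk Q$ with $Q\colon 1\to 2\leftarrow 3$ one has $\tau S_1=e_3A$, the indecomposable with dimension vector $(0,1,1)$, and $\hom_A(\tau S_1,S_2)\cong S_2e_3=0$, while $\Ext^1_A(S_1,S_2)\neq0$ because $0\to S_2\to e_1A\to S_1\to 0$ is nonsplit; so that extension is a pushout of the almost split sequence along no map at all. (The paper's $D_4$ example in Section~\ref{sect:deg}, where $\hom(\tau M,S)=0$ but $\Ext^1(M,S)$ is $2$-dimensional, makes the same point.) Worse, the reason you give for discarding degeneration arguments --- that the number of indecomposable summands is not monotone along degenerations --- is precisely the phenomenon that the paper proves \emph{cannot} occur in this setting: that is Proposition~\ref{prop:deg}. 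The paper's proof notes that $E$ degenerates to $M(u)\oplus M(w)$ (Bongartz, Riedtmann) and then obtains $\rk{E}\leq 2$ via Riedtmann's machinery: $\delta(V)=\dim\hom_A(V,N)-\dim\hom_A(V,M)\geq0$, the direct sum of all almost split sequences taken with multiplicities $\delta(V)$ (finite because the representation type is), the isomorphism $M\oplus X\oplus Z\cong N\oplus Y$ together with Krull--Remak--Schmidt, and Butler--Ringel's bound $\rk{E_V}\leq2$. Both hypotheses of the theorem enter exactly there; your proposed ``direct inspection of overlap and arrow extensions'' is not carried out and, as it stands, has nothing to start from once the pushout step is removed.

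In the converse direction, your three-arrow construction and your binomial case are in substance the paper's (Proposition~\ref{prop:one}, with your first construction the dual of the paper's; note also that the paper must \emph{prove}, not merely assert, that a binomial relation forces a non-uniserial projective--injective whose $\rad/\soc$ is decomposable). The genuine gap is the case you defer to ``an analogous local module supported on the offending path'': when \ref{ax:S1} holds but \ref{ax:S2} fails, i.e.\ there are arrows $\alpha$ and $\beta\neq\gamma$ with $\alpha\beta,\alpha\gamma\notin I$. At that point $A$ is only known to be of finite representation type, not special biserial, so $I$ may contain a commutativity relation such as $\alpha\beta-\delta\epsilon$ with $\delta\epsilon$ a path leaving your local configuration; then the $D_4$-shaped modules supported on $\alpha$, $\beta$, $\gamma$ are simply not $A$-modules, and no uniform hand-built extension is available. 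This is exactly where the paper leans on finite representation type: since all almost split sequences have middle terms with at most two direct factors, Theorem~4.6 of \cite{AR:uniserial} shows that the radical of every non-uniserial indecomposable projective is a \emph{direct} sum of two uniserial modules, and then $\alpha A\subseteq\rad e_iA$, which has simple top and is not uniserial, gives the contradiction. Tellingly, your treatment of this case never invokes finite representation type, even though it is the hypothesis the argument must use; without this ingredient, or a substitute for it, your case analysis is incomplete at precisely its hardest point.
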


The proof of the necessity of the condition that I can give is based
on the proof of Theorem~1 in Christine Riedtmann's
paper~\cite{Riedtmann} and involves the theory of degenerations of
modules. The key observation is the following statement that appears
as Proposition~\ref{prop:deg} in the body of this paper:

\begin{TheoremIntro}\label{thm:deg:intro}
Let $A$ be a string algebra of finite representation type and let~$M$
and~$N$ be two $A$-modules with the same dimension vectors. If $M$
degenerates to~$N$, then $N$ has at least as many direct factors
as~$M$.
\end{TheoremIntro}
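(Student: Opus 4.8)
The plan is to work in the module variety and to reduce the statement to a combinatorial property of the short exact sequences that control minimal degenerations. Write $\Rep_A(d)$ for the affine variety of $A$-module structures on a fixed space of the common dimension vector $d$ of $M$ and $N$; the group $\GL_d$ acts with orbits the isomorphism classes, and $M$ degenerating to $N$ means $\O_N\subseteq\overline{\O_M}$. Write $\#X$ for the number of indecomposable direct summands of a module $X$, so that the assertion to be proved is $\#N\geq\#M$. Because $A$ has finite representation type there are only finitely many isomorphism classes of modules with dimension vector $d$, hence finitely many $\GL_d$-orbits in $\Rep_A(d)$, and degeneration is a partial order on this finite set. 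It therefore suffices to treat a single covering step of this order: if $\#N\geq\#M$ holds whenever $M\to N$ is a \emph{minimal} degeneration, the general case follows by factoring $\O_N\subset\overline{\O_M}$ through a maximal chain of covers.

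For a minimal degeneration I would use the analysis behind Theorem~1 of~\cite{Riedtmann} to realise the step by a single short exact sequence: there is a common direct summand $C$ with $M\cong C\oplus M'$ and $N\cong C\oplus U\oplus V$, together with a non-split exact sequence $0\to U\to M'\to V\to 0$ in which $M'$ and $U\oplus V$ share no summand. The point of representation-finiteness is precisely that the auxiliary module needed in Zwara's general description of degenerations can be dispensed with here. Granting this, $\#N=\#C+\#(U\oplus V)$ and $\#M=\#C+\#M'$, so the whole theorem collapses to the inequality
\[
  \#(U\oplus V)\ \geq\ \#M'
\]
for every short exact sequence $0\to U\to M'\to V\to 0$ of modules over a string algebra.

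This inequality is the heart of the matter, and I emphasise that it is \emph{false} over an arbitrary representation-finite algebra; its failure there is exactly the reason the number of summands can drop under degeneration in general, and hence the reason the theorem is peculiar to string algebras. To establish it I would exploit the combinatorics of string modules: as $A$ is representation finite it admits no band modules, so the indecomposable summands of $M'$, $U$ and $V$ are string modules, and the non-split extensions of string modules are spanned by the ``arrow'' and ``overlap'' sequences whose middle term has at most two string summands --- the same mechanism responsible for the two-summand bound in almost split sequences. Reading this description backwards, I would argue that replacing the middle term $M'$ by the sub-plus-quotient $U\oplus V$ can only cut strings into pieces and never splice two strings together, whence $\#(U\oplus V)\geq\#M'$.

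The step I expect to be the genuine obstacle is twofold but really a single issue: pinning down a minimal degeneration in the exact-sequence form above, and simultaneously proving the combinatorial ``no splicing'' lemma for the strings involved. Concretely, one must rule out that a covering degeneration fuses two indecomposable string modules into one indecomposable module, since that --- and only that --- would lower the summand count; excluding it is exactly what the string hypothesis buys. Once minimal degenerations are identified as string-splitting moves, the count is immediate and the theorem follows from the chain argument of the first paragraph.
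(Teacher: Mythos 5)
Your reduction to minimal degenerations in the first paragraph is fine: representation-finiteness gives finitely many orbits, degeneration is a partial order on them, and covering steps suffice. The proof breaks immediately afterwards, at the claim that a minimal degeneration, after splitting off a maximal common summand $C$, is realized by a single exact sequence $0\to U\to M'\to V\to 0$ with $M\cong C\oplus M'$ and $N\cong C\oplus U\oplus V$. This is not what ``the analysis behind Theorem~1'' of \cite{Riedtmann} gives: her argument produces an isomorphism $M\oplus X\oplus Z\cong N\oplus Y$, where $0\to X\to Y\to Z\to 0$ is a direct sum of Auslander--Reiten sequences, so an auxiliary module sits on \emph{both} sides and nothing in her proof removes it; likewise the Riedtmann--Zwara description keeps an auxiliary $Z$ in $0\to Z\to Z\oplus M\to N\to 0$. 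Whether every minimal degeneration with disjoint ends is given by an extension in this way is precisely a question raised by Bongartz in \cite{Bongartz} and settled only in special cases; it genuinely can fail without strong hypotheses, since by his Example~7.1 modules with arbitrarily many summands can degenerate to indecomposable ones, and no chain of extension degenerations can terminate at an indecomposable module. So the pivot of your argument is an unproved assertion, not a consequence of representation-finiteness that you can cite.

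Even granting that reduction, your argument for $\rk{U\oplus V}\geq\rk{M'}$ is not a proof, and the inference it relies on is the one the paper explicitly warns against. Knowing that $\Ext^1_A$ between indecomposable string modules is spanned by arrow and overlap sequences whose middle terms have at most two summands says nothing about an arbitrary extension class, which is a linear combination of basis elements: in the paper's example over the three-subspace quiver, a two-dimensional $\Ext^1$ between indecomposables contains three lines of classes whose middle terms have two direct factors, while every class outside those lines has a middle term with three. (That algebra is not a string algebra, but it invalidates the pattern ``basis extensions do not splice, hence no extension splices'', which is all you offer; moreover you need the inequality for decomposable $U$, $V$ and $M'$, where the basis description does not even apply.) Note also that in the paper your inequality is exactly Corollary~\ref{coro:two}, and it is a \emph{consequence} of Proposition~\ref{prop:deg}, not an ingredient of its proof. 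The paper's actual argument runs the other way around and avoids both of your gaps: setting $\delta(W)=\dim\hom_A(W,N)-\dim\hom_A(W,M)\geq 0$, it forms the finite direct sum $0\to X\to Y\to Z\to 0$ of the Auslander--Reiten sequences $\Sigma_W^{\delta(W)}$ over the non-projective indecomposables $W$, quotes Riedtmann's isomorphism $M\oplus X\oplus Z\cong N\oplus Y$ as it stands, auxiliary terms and all, and concludes $\rk{N}-\rk{M}=\sum_W\delta(W)\bigl(2-\rk{E_W}\bigr)\geq 0$ from the Butler--Ringel bound $\rk{E_W}\leq 2$ of \cite{BR}. To rescue your route you would need an honest combinatorial proof of the inequality for arbitrary exact sequences of string modules, and a new theorem identifying minimal degenerations with extensions over representation-finite algebras.
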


\noindent The proof of this depends crucially both on the finiteness
of the representation type and on the fact that the middle terms of
almost split short exact sequences of modules over string algebras
have at most two direct factors. 

\medskip

It is natural to wonder what happens to Theorem~\ref{thm:main} if the
hypothesis on the representation type is dropped. What I can prove in
that direction is that some control on the representation type is
needed of one wants a bound on the number of direct factors of an
extension of indecomposable modules: for string algebras we need to be
as close to having finite representation type as possible.

\begin{TheoremIntro}\label{thm:dom:intro}
If a string algebra has non-domestic representation type, then there
are extensions of indecomposable modules with arbitrarily many direct
factors.
\end{TheoremIntro}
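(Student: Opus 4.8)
The plan is to exploit the only structural feature that distinguishes non-domestic string algebras from domestic (and finite) ones, namely the supply of bands. Recall that a string algebra has finite representation type exactly when it has no bands, is domestic exactly when it has finitely many of them, and is non-domestic exactly when it has infinitely many; moreover the infinitude of bands in the non-domestic case is not accidental but comes from a \emph{free combination}: there are two bands passing through a common vertex that can be concatenated in every order, so that for each~$m$ one obtains a band $B_m = b_{i_1}b_{i_2}\cdots b_{i_m}$ whose length grows linearly in~$m$ and which carries $m$ distinguished \emph{junction} vertices at which consecutive factors are glued. I would take this combinatorial dichotomy as the starting input; in the write-up it should be quoted from the classification of domestic string algebras.

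Given $B_m$, the idea is to manufacture a non-split extension of two \emph{indecomposable} modules whose middle term falls apart into one summand per junction, so that the number of summands is $\Theta(m)$ and hence unbounded. Because every string module and every band module is automatically indecomposable, the two outer terms cause no trouble: I would let $X_m$ and $Y_m$ be the two indecomposable string (or band) modules read off from $B_m$ — concretely, the string module of the linear word underlying $B_m$ and a one-step shift of it, or a pair of band modules $M(B_m,1,\lambda)$ with suitable parameters. The extension itself is the \emph{canonical} one attached to the junction vertices: at each junction the glued string either splits off a socle piece (a valley) or factors off a top piece (a peak), and it is exactly this local splitting, repeated $m$ times along $B_m$, that breaks the middle term into $\Theta(m)$ string-module summands.

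The verification then has three ingredients. First, indecomposability of $X_m,Y_m$ is free, as noted. Second, exactness and non-splitness of $0 \to Y_m \to E_m \to X_m \to 0$ I would establish through the combinatorial description of $\Ext^1$ between string modules in terms of arrow and overlap extensions, checking that the chosen class is nonzero. Third, and most importantly, the decomposition $E_m \cong \bigoplus_{j=1}^{\Theta(m)} M(w_j)$ into many string modules I would read off from the graph-map/overlap calculus, choosing the band parameters so that the listed summands are genuinely nonzero and the gluing loci are independent. As a sanity check this is consistent with the philosophy of Theorem~\ref{thm:deg:intro}: here $E_m$ is a module with many factors that degenerates to $X_m\oplus Y_m$, a module with only two, so the monotonicity of Theorem~\ref{thm:deg:intro} must fail outside finite type — which is precisely what Theorem~\ref{thm:dom:intro} asserts.

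The hard part, I expect, is not the existence of the extension but the two bookkeeping steps that make the count grow. On one side I must extract, from the bare statement that there are infinitely many bands, the free pair $b_1,b_2$ together with the junction structure of $B_m$; this is where the fine classification of non-domestic string algebras is really used, and getting the junctions to be honest peaks or valleys, so that the local splitting actually occurs, is the delicate point. On the other side I must carry out the overlap computation of the middle term carefully enough to certify that the number of summands is $\Theta(m)$ rather than bounded by accidental recombination; controlling the band parameters~$\lambda$ to prevent neighbouring summands from fusing is the crux of that computation.
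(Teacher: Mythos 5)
Your combinatorial starting point is essentially the right one and matches the paper's: non-domesticity supplies a free, non-cyclic semigroup of cyclic words (the paper gets this via Ringel's lemma, which produces non-serial words $x$, $y$, $z$ with $yxy$ and $zxz$ words, $y$ beginning and ending with direct letters and $z$ with inverse ones). But the heart of the theorem --- exhibiting one extension class whose middle term \emph{provably} has many indecomposable summands --- is exactly what you defer as ``bookkeeping'', and the mechanism you sketch for it would not work. The canonical (arrow and overlap) extension classes between string or band modules have middle terms with at most \emph{two} direct summands; this is precisely the point the paper makes when discussing the $\Ext$-bases for gentle algebras at the end of Section~2. So no ``canonical extension attached to the junction vertices'', repeated locally along $B_m$, can break the middle term into $\Theta(m)$ string summands: a large number of summands can only arise from a carefully cooked, non-generic class, and identifying the decomposition of its middle term is the entire difficulty. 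Your own sanity check shows this as well: \emph{every} middle term degenerates to the direct sum of the two end terms, so the class you need is highly special, not canonical. A second, smaller gap: you treat concatenations of bands as bands, but primitivity of a concatenated cyclic word is not automatic and has to be proved.

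The paper's construction, which has no counterpart in your sketch, runs as follows. For a prime $p=2n+1$ larger than~$7$ and different from the characteristic of~$\kk$, form the two cyclic words $u=(xyxz)^nxy$ and $v=xz(xyxz)^n$; their primitivity is established by a Fine--Wilf argument (Proposition~1.3.5 of Lothaire), which is where the primitivity issue you glossed over gets handled. The extension is then a gluing of the band modules $B(u,1,1)$ and $B(v,1,1)$ along two arrows, with one of the two new structure constants equal to~$-1$; the resulting middle term is exactly the module built from the cyclic word $uv=(xyxz)^p$ with parameter~$-1$ as band modules are built. Since $uv$ is a proper $p$-th power and $p$ is prime and invertible in~$\kk$, this module carries a rotational symmetry of order~$p$ and therefore splits into at least~$p$ indecomposable summands (concretely, band modules indexed by the $p$-th roots of~$-1$). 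The many summands thus come from the non-primitivity of a single cyclic word together with an eigenvalue decomposition --- not from local peak/valley splittings, and not into string modules. This idea, and the primitivity control that makes the two \emph{outer} terms indecomposable while the middle word fails to be primitive, is what your proposal is missing.
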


Here, that the string algebra, which certainly has tame representation
type, have non-domestic representation type means that the minimal
number~$\mu(d)$ of $1$-parametric families of indecomposable modules
needed to describe the indecomposable modules of each dimension~$d$
is not bounded by a scalar independent of~$d$. Proving
Theorem~\ref{thm:dom:intro} seems to require delving into the
combinatorics of bands. I do this in Section~\ref{sect:bands}, with a
beautiful lemma of Claus Ringel from~\cite{R:generic} as starting
point. 

\medskip

Theorems~\ref{thm:main} and~\ref{thm:dom:intro} leave open the
question of whether one can find a bound for the number of direct
factors in an extension of indecomposable modules over domestic string
algebras or not. This question seems also to require dealing with the
combinatorics of bands, and I expect to treat it elsewhere.

I heartfully thank Claudia Chaio, Cristian Chaparro, María Julia
Redondo,  and Pamela Suárez for many enlightening conversations on
subjects related to those treated here that were the original
motivation for this work.

\bigskip

\noindent\textit{Conventions.} As mentioned above, throughout this
paper~$\kk$ denotes an algebraically closed field. Our algebras are
finite dimensional $\kk$-algebras except possibly for path algebras,
and modules are finite dimensional right modules. If~$\alpha\beta$ is
a path in a quiver, then the arrow~$\alpha$ ends where the
arrow~$\beta$ starts. If $M$ is a module, we denote by~$\abs{M}$ the
number of summands in any decomposition of~$M$ as a direct sum of
indecomposable submodules.

Except when defining what a string algebra is we will implicitly
suppose that all algebras are basic. This is no restriction, as our
results are all Morita invariant, and allows us to assume that
algebras are given by quivers and relations.

\section{Middle terms with at most two direct factors}
\label{sect:sufficiency}

Let $Q$ be a finite quiver, let $\kk Q$ be the corresponding path
algebra, and let $I$ be an admissible ideal in~$\kk Q$. We say,
following Andrei Skowro\'nski and Josef Waschbüsch in~\cite{SW}, that
the presentation $(Q,I)$ is \newterm{special biserial} if the
following two conditions are satisfied.
\begin{enumerate}[label=(S\arabic*), ref=(S\arabic*), left=2em]

\item\label{ax:S1} Every vertex in~$Q$ has in-degree at most~$2$ and
out-degree at most~$2$.

\item\label{ax:S2} If $\alpha$ is an arrow in~$Q$, then there is at
most one arrow~$\beta$ such that $\alpha\beta$ is a path in~$Q$ and
does not belong to~$I$, and there is at most one arrow~$\gamma$ such
that $\gamma\alpha$ is a path in~$Q$ and does not belong to~$I$.

\end{enumerate}
If additionally the following third condition is satisfied we say that
the presentation~$(Q,I)$ is \newterm{string}.
\begin{enumerate}[resume*]

\item\label{ax:S3} The ideal~$I$ is generated by the paths it
contains.

\end{enumerate}
A finite dimensional algebra~$A$ is a \newterm{string algebra} if it
is Morita equivalent to an algebra of the form $\kk Q/I$ for some
finite quiver~$Q$ and some admissible ideal~$I$ in the path
algebra~$\kk Q$ such that the presentation $(Q,I)$ is string. 

\bigskip

The first step on proving our main result is the following.

\begin{Proposition}\label{prop:one}
Let $Q$ be a finite quiver, let $I$ be an admissible ideal in the path
algebra~$\kk Q$ and let $A$ be the quotient algebra~$\kk Q/I$. 
\begin{thmlist}

\item If every extension between indecomposable $A$-modules has at
most two direct factors, then the presentation $(Q,I)$ satisfies the
condition~\ref{ax:S1}.

\item Moreover, if the presentation $(Q,I)$ also satisfies the
condition~\ref{ax:S2}, then it is in fact string.

\end{thmlist}
\end{Proposition}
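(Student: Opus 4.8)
The plan is to prove both parts by exhibiting, whenever the relevant structural condition fails, a non-split extension of two indecomposable modules whose middle term has three summands. Write $S_i$ and $P_i$ for the simple and indecomposable projective modules at a vertex $i$, so that $\top P_i=S_i$ and $\rad P_i/\rad^2P_i\cong\bigoplus_{\alpha\colon i\to j}S_j$; thus the out-degree of $i$ equals $\dim\rad P_i/\rad^2P_i$, and dually the in-degree of $i$ equals $\dim\soc^2E_i/\soc E_i$ for the injective envelope $E_i$ of $S_i$. Since the standard duality preserves $\abs{\place}$ and indecomposability, turns an extension of $L$ by $N$ into an extension of the dual of $N$ by the dual of $L$, and interchanges in- and out-degrees while preserving the conditions \ref{ax:S1}--\ref{ax:S3}, it suffices to bound in-degrees.

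For part \thmitem{1} I assume some vertex $v$ has in-degree at least $3$, with arrows $\beta_1,\beta_2,\beta_3\colon u_i\to v$. For each $i$ let $M_i$ be the two-dimensional uniserial module with top $S_{u_i}$ and socle $S_v$ on which $\beta_i$ acts invertibly and every other arrow acts as zero; this is a legitimate $A$-module because $I$ is admissible. Put $E=M_1\oplus M_2\oplus M_3$, let $N\subseteq E$ be the diagonal copy $\langle s_1+s_2+s_3\rangle\cong S_v$ of the common socle, and set $L=E/N$. Then $0\to N\to E\to L\to 0$ is short exact with $\abs E=3$, $N$ is simple, and $L$ is indecomposable: after the quotient the three socle lines become three pairwise distinct lines in $E_v/N\cong\kk^2$, so on its support $L$ is the indecomposable three-subspace configuration and a direct check gives $\End_A L=\kk$. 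As the sequence is visibly non-split this contradicts the hypothesis, so every in-degree, and dually every out-degree, is at most $2$, which is \ref{ax:S1}. (The only delicate point is the computation $\End_AL=\kk$ in the degenerate cases where some $u_i$ coincide or equal $v$.)

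For part \thmitem{2} assume in addition \ref{ax:S1} and \ref{ax:S2}, and suppose \ref{ax:S3} fails. Let $J\subseteq I$ be the ideal generated by the paths contained in $I$; then $J\subsetneq I$. I first pin down the shape of a minimal relation $\rho\in I\setminus J$. Taking $\rho$ uniform, say $\rho=e_a\rho e_b$, it is a combination of paths $a\to b$ not lying in $I$, and \ref{ax:S2} forces there to be at most two such paths, since at most two arrows leave $a$ and along each of them the nonzero continuation is unique. As $\rho\notin J$ there are exactly two, so after scaling $\rho=p_1-\lambda p_2$ with $\lambda\in\kk^\times$ and $p_1,p_2$ the two nonzero paths $a\to b$; by admissibility each has length at least $2$, and by \ref{ax:S2} and its dual they begin with the two distinct arrows out of $a$ and end with the two distinct arrows into $b$.

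Out of this commutativity relation I build the bad extension. Let $M$ be the thin ``commutative polygon'' supported on the vertices of $p_1$ and $p_2$, with top $S_a$ and socle $S_b$, on which both arms act invertibly so that $p_1=\lambda p_2$ holds; it is indecomposable, and so are $N=\rad M$ (simple socle $S_b$) and $L=M/\soc M$ (simple top $S_a$). The extension I want is the pushout of $0\to\soc M\to M\to L\to 0$ along the socle inclusion $\soc M=S_b\hookrightarrow N$, namely $E=(M\oplus N)/\langle(s,-s):s\in\soc M\rangle$. Since $M$ is generated by its top while $(\rad M)_a=0$, one has $\hom_A(M,N)=0$, so the connecting map $\hom_A(S_b,N)\to\Ext^1_A(L,N)$ is injective and the extension is non-split. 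Finally $E$ decomposes: the element at $a$ generates a copy of $M$, while the ``differences'' along the two arm interiors generate a complement isomorphic to $\rad M/\soc M=U_1\oplus U_2$, the two arm-interiors, each nonzero because the arms have length at least $2$. Hence $\abs E\ge 3$, the desired contradiction, so \ref{ax:S3} holds and $(Q,I)$ is string. I expect the main obstacle to lie exactly here in part \thmitem{2}: controlling minimal relations precisely enough to reduce to a single binomial, and then verifying the splitting $E\cong M\oplus U_1\oplus U_2$ in all configurations, in particular when the two arms share an interior vertex, where $\rad M/\soc M$ need no longer be a clean direct sum of the two arms.
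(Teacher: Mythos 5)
Your part \thmitem{1} is correct and is essentially the paper's own argument: the paper uses the same three two-dimensional uniserial modules and the same extension of the five-dimensional indecomposable module by the simple at the vertex of in-degree three (written there with dimension vectors rather than as a quotient by the diagonal copy of the socle), and it treats the degenerate cases where some of the four vertices coincide in the same implicit way you do.

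Part \thmitem{2}, however, has two genuine gaps, located exactly at the places you flag as ``delicate''. First, the reduction of a minimal relation to a binomial $p_1-\lambda p_2$ whose two paths begin with distinct arrows and end with distinct arrows does \emph{not} follow from \ref{ax:S2} ``and its dual'' as you assert. Under \ref{ax:S1} and \ref{ax:S2} the paths starting at $a$ and not in $I$ form two prefix-chains, but a single chain may pass through $b$ several times: in the quiver $a\xrightarrow{\alpha}b\xrightarrow{\beta}c\xrightarrow{\gamma}b$ with $I=(\gamma\beta)$ (an admissible ideal satisfying \ref{ax:S1} and \ref{ax:S2}) the paths $\alpha$ and $\alpha\beta\gamma$ are distinct paths from $a$ to $b$ outside $I$ lying on one chain. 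So there can be more than two paths $a\to b$ outside $I$, and a relation could a priori involve two paths $u$ and $v=uw$ with $w$ a closed path. The paper needs two real inputs here that you skip: the corollary to Lemma~1.1 of \cite{SW}, which says that for a special biserial presentation $I$ is generated by paths and binomials, and the admissibility of $I$, used through the identity $u(1-\lambda w)\sum_{t=0}^{k-1}\lambda^tw^t=u-\lambda^kw^k$ with $w^k\in I$ to force $u\in I$ and thus exclude the same-chain case; your only use of admissibility is to bound the lengths of $p_1,p_2$ from below, and this essential second use is absent. Second, your module $M$ (the thin commutative polygon) is never shown to be an $A$-module --- every element of $I$, not just $\rho$, must annihilate it --- and, as you yourself note, both this and the decomposition $\rad M/\soc M=U_1\oplus U_2$ can fail when the two arms share vertices or revisit them. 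The paper resolves precisely this by taking $M$ to be the indecomposable projective $e_iA$: the path analysis shows that $u$ and $v$ span a one-dimensional socle, that $e_iA$ is consequently also injective, and that $\rad e_iA/\soc e_iA\cong\alpha A/uA\oplus\beta A/vA$ with both summands nonzero; the contradicting sequence is then the almost split sequence $0\to\rad e_iA\to e_iA\oplus\rad e_iA/\soc e_iA\to e_iA/\soc e_iA\to 0$, which is the very pushout you construct. So your plan for \thmitem{2} converges to the paper's sequence, but filling the two holes essentially amounts to reproducing the paper's arguments.
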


\begin{proof}
Let us suppose that every extension of indecomposable $A$-modules has
at most two direct factors and, to reach a contradiction, that there
are in~$Q$ three arrows~$\alpha$,~$\beta$ and~$\gamma$ with the same
target vertex~$i$. We let $j_1$,~$j_2$ and~$j_3$ be the source
vertices of those three arrows; notice that the cardinal of the set
$\{i,j_1,j_2,j_3\}$ can be any integer from~$1$ to~$4$. In any case,
$Q$ contains three arrows that look like
  \[
  \begin{tikzpicture}[font=\footnotesize]
  \node[inner sep=0pt, outer sep=0pt] (i) at (0,0) {$i$};
  \node[inner sep=0pt, outer sep=0pt] (j1) at (150:1.25) {$j_1$};
  \node[inner sep=0pt, outer sep=0pt] (j2) at (90:1.25) {$j_2$};
  \node[inner sep=0pt, outer sep=0pt] (j3) at (30:1.25) {$j_3$};
  \foreach \a/\b/\where/\x in {
        j1/i/north east/\alpha, 
        j2/i/west/\beta, 
        j3/i/north west/\gamma}
    \draw[quiver arrow] (\a) -- node[inner sep=1pt, anchor=\where] {$\x$} (\b);
  \end{tikzpicture}
  \]
and we can construct an extension of a $5$-dimensional indecomposable
module by the simple module at~$i$ that, if we draw dimension vectors,
looks as follows:
  \[
  \def\enlarge{
      \useasboundingbox
        ([shift={(3mm,0mm)}]current bounding box.north east) 
        rectangle 
        ([shift={(-3mm,0mm)}]current bounding box.south west);
  }
  \def\d{0.6}
  \begin{tikzcd}[column sep=2em]
  0 \arrow[r]
    & 
      \begin{tikzpicture}[font=\footnotesize, scale=0.5,
        baseline={($(current bounding box.center)+(0,-1mm)$)}]
      \node[inner sep=0pt, outer sep=0pt] (i) at (0,0) {$1$};
      \node[inner sep=0pt, outer sep=0pt] (j1) at (150:\d) {$0$};
      \node[inner sep=0pt, outer sep=0pt] (j2) at (90:\d) {$0$};
      \node[inner sep=0pt, outer sep=0pt] (j3) at (30:\d) {$0$};
      \enlarge
      \end{tikzpicture}
      \arrow[r]
    & 
      \begin{tikzpicture}[font=\footnotesize, scale=0.5,
        baseline={($(current bounding box.center)+(0,-1mm)$)}]
      \node[inner sep=0pt, outer sep=0pt] (i) at (0,0) {$1$};
      \node[inner sep=0pt, outer sep=0pt] (j1) at (150:\d) {$1$};
      \node[inner sep=0pt, outer sep=0pt] (j2) at (90:\d) {$0$};
      \node[inner sep=0pt, outer sep=0pt] (j3) at (30:\d) {$0$};
      \enlarge
      \end{tikzpicture}
      \oplus
      \begin{tikzpicture}[font=\footnotesize, scale=0.5,
        baseline={($(current bounding box.center)+(0,-1mm)$)}]
      \node[inner sep=0pt, outer sep=0pt] (i) at (0,0) {$1$};
      \node[inner sep=0pt, outer sep=0pt] (j1) at (150:\d) {$0$};
      \node[inner sep=0pt, outer sep=0pt] (j2) at (90:\d) {$1$};
      \node[inner sep=0pt, outer sep=0pt] (j3) at (30:\d) {$0$};
      \enlarge
      \end{tikzpicture}
      \oplus
      \begin{tikzpicture}[font=\footnotesize, scale=0.5,
        baseline={($(current bounding box.center)+(0,-1mm)$)}]
      \node[inner sep=0pt, outer sep=0pt] (i) at (0,0) {$1$};
      \node[inner sep=0pt, outer sep=0pt] (j1) at (150:\d) {$0$};
      \node[inner sep=0pt, outer sep=0pt] (j2) at (90:\d) {$0$};
      \node[inner sep=0pt, outer sep=0pt] (j3) at (30:\d) {$1$};
      \enlarge
      \end{tikzpicture}
      \arrow[r]
    & \begin{tikzpicture}[font=\footnotesize, scale=0.5,
        baseline={($(current bounding box.center)+(0,-1mm)$)}]
      \node[inner sep=0pt, outer sep=0pt] (i) at (0,0) {$2$};
      \node[inner sep=0pt, outer sep=0pt] (j1) at (150:\d) {$1$};
      \node[inner sep=0pt, outer sep=0pt] (j2) at (90:\d) {$1$};
      \node[inner sep=0pt, outer sep=0pt] (j3) at (30:\d) {$1$};
      \enlarge
      \end{tikzpicture}
      \arrow[r]
    & 0
  \end{tikzcd}
  \]
That there are such $A$-modules is immediate: we certainly have $\kk
Q$-modules with these descriptions and every path of length~$2$ acts
as zero on them, so they are in fact $A$-modules. Since the existence
of this short exact sequence contradicts the hypothesis, we see that
every vertex in~$Q$ has in-degree at most~$2$. A dual argument also
bounds the out-degree of the vertices of~$Q$, so $(Q,I)$ satisfies the
condition~\ref{ax:S1} in the definition of string presentations. This
proves part~\thmitem{1} of the proposition.

In order to prove part~\thmitem{2} let us now assume the hypothesis
that the presentation~$(Q,I)$ satisfies the condition~\ref{ax:S2}, so
that it is in fact special biserial. According to the corollary to
Lemma~1.1 in~\cite{SW}, the ideal $I$ is generated by paths and
binomials, that it, linear combinations of \emph{two} parallel paths.
It follows from this that in order to show that the presentation
$(Q,I)$ is string it is enough that we prove that whenever a linear
combination of two parallel paths is in~$I$ then both paths also
belong to~$I$.

Let us then suppose that there is in~$I$ an element of the
form~$u-\lambda v$ with $u$ and $v$ different parallel paths that do
not belong to~$I$ and $\lambda$ a non-zero scalar. Multiplying by a
$\lambda^{-1}$ if needed we may assume that moreover $u$ is not longer
that~$v$. Both~$u$ and~$v$ have length at least~$2$ because the
ideal~$I$ is admissible.

Suppose for a moment that both~$u$ and~$v$ start with the same arrow.
Since neither~$u$ nor~$v$ is in~$I$ and $u$ is not longer that $v$,
the condition~\ref{ax:S2} implies that there is a path~$w$, which is
necessarily closed, such that $v=uw$. Since $u\neq v$ this path~$w$
has positive length and, because the ideal~$I$ is admissible, there is
a positive integer~$k$ such that $w^k\in I$. Now we have that
$u(1-\lambda w)=u-\lambda v\in I$, so that
  \[
  I \ni u(1-\lambda w)\sum_{t=0}^{k-1}\lambda^tw^t
    = u - \lambda^kw^k,
  \]
and this is absurd since~$u\not\in I$. A symmetric argument shows, of
course, that the paths $u$ and~$v$ cannot end in with the same arrow.

It follows from all this and the fact that both $u$ and $v$ have
length at least~$2$ that there are arrows~$\alpha$,~$\beta$,~$\gamma$
and~$\delta$ in~$Q$ and paths $\bar u$ and~$\bar v$, possibly of
length~$0$, such that $\alpha\neq\beta$, $\gamma\neq\delta$,
$u=\alpha\bar u\gamma$ and $v=\beta\bar v\delta$. 

Let us write $i$ and~$j$ for the common source and target of~$u$
and~$v$, respectively. If $\eta$ is an arrow coming out of~$j$, then
because $\gamma\neq\delta$ we have that one of~$u\eta$ or~$v\eta$ is
in~$I$ and therefore, since $u-\lambda v\in I$, that so is the other.
Similarly, we have that $\epsilon u$ and $\epsilon v$ are in~$I$ for
all arrows~$\epsilon$ which have~$i$ as target.

Let now $w$ be a path of positive length that starts from~$i$, is not
in~$I$, and such that there is no arrow~$\eta$ such that $w\eta\not\in
I$. The arrows coming out of~$i$ are~$\alpha$ and~$\beta$, so one of
them is the first arrow in~$w$. Let us suppose, for example, that
$\alpha$ is. In view of~\ref{ax:S2}, one of~$u$ or~$w$ is a prefix of
the other, and since for all arrows~$\eta$ we have $u\eta\in I$ and
$w\eta\in I$, we see that in fact $w=v$. Similarly, if $\beta$ were
the first arrow in~$w$ we would have that $w=v$. We thus see that $u$
and $v$ are the only paths starting from~$i$, not in~$I$, and which
cannot be extended on the right: in other words, they span the socle
of the indecomposable projective module~$e_iA$, which is the
projective cover of the simple module~$S_i\coloneqq e_iA/\rad e_iA$.
Moreover, since $u-\lambda v\in I$, we see that that socle is of
dimension~$1$, so isomorphic to the simple module~$S_j$. A dual
argument shows that $u$ and~$v$ are the only paths not in~$I$ that end
in~$j$ and that cannot be extended on the left: the top of the
injective envelope of the simple~$S_j$ is thus $S_i$. It follows from
this that the indecomposable projective module $e_iA$ is in fact also
injective\footnote{The injective envelope $\soc e_iA\cong
S_j\hookrightarrow D(Ae_j)$ factors through a map $f:e_iA\to D(Ae_j)$,
and this map is injective because it does not vanish on the simple
socle of~$e_iA$. Similarly, the projective cover $e_iA\to S_i\cong\top
D(Ae_j)$ factors through a map $e_iA\to D(Ae_j)$, and this map is
surjective because its composition with $D(Ae_j)\to\top D(Ae_j)$ is
surjective: it follows from this that the map~$f$ is an isomorphism.}
and we therefore have an
almost split sequence
  \[
  \begin{tikzcd}
  0 \arrow[r]
    & \rad e_iA \arrow[r]
    & e_iA\oplus\dfrac{\rad e_iA}{\soc e_iA} \arrow[r]
    & \dfrac{e_iA}{\soc e_iA} \arrow[r]
    & 0
  \end{tikzcd}
  \]
The two ends of this extension are indecomposable, because the socle
and the top of~$e_iA$ are simple, so the middle term has at most two
direct factors: it follows that the quotient $\rad e_iA/\soc e_iA$ is
zero or indecomposable. This is absurd, since in our situation we have
  \[
  \dfrac{\rad e_iA}{\soc e_iA} 
  \cong \dfrac{\alpha A}{uA} \oplus \dfrac{\beta A}{vA}
  \]
and both summands are non-zero.
\end{proof}

We have written Proposition~\ref{prop:one} in the slightly weird form
that we have so as to make it explicit that it is
condition~\ref{ax:S2} that is difficult to satisfy. One way to do that
is to restrict ourselves to algebras of finite representation type:

\begin{Proposition}
Let $Q$ be a finite quiver, let $I$ be an admissible ideal in the path
algebra~$\kk A$, and let $A\coloneqq\kk Q/I$. If $A$ has finite
representation type and every extension between indecomposable
$A$-modules has at most two direct factors, then $(Q,I)$ is a string
presentation.
\end{Proposition}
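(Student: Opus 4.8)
The conditions~\ref{ax:S1} and~\ref{ax:S2}, and both hypotheses of the statement, are invariant under passing to the opposite algebra~$A^{\mathrm{op}}$: the duality $\hom_\kk(\place,\kk)$ preserves the representation type and turns a short exact sequence of indecomposables into another one with the same number of middle summands, while reversing the arrows of~$Q$ interchanges in- and out-degrees and swaps the two clauses of~\ref{ax:S2}. Since Proposition~\ref{prop:one}\thmitem{1} already provides condition~\ref{ax:S1}, and Proposition~\ref{prop:one}\thmitem{2} will finish the proof as soon as we know~\ref{ax:S2}, the plan is to assume that~\ref{ax:S2} fails and to derive a contradiction with one of the two hypotheses. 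After possibly replacing~$A$ by~$A^{\mathrm{op}}$ we may assume the failure occurs on the right, so that there are an arrow $\alpha\colon a\to b$ and two \emph{distinct} arrows $\beta_1,\beta_2$ with $\alpha\beta_1,\alpha\beta_2\notin I$; by~\ref{ax:S1} these are the only two arrows leaving~$b$, say $\beta_t\colon b\to c_t$.

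The key reduction I would use is that for any quotient $B=A/J$ a short exact sequence of $B$-modules is also one of $A$-modules with exactly the same indecomposable summands, because $A$ acts on $B$-modules through~$B$ and hence $\End_A$ and $\End_B$ agree on them; so it is enough to realize, as a quotient of~$A$, some representation-finite algebra that already violates the two-summand property. The model case is when $a,b,c_1,c_2$ are four distinct vertices and $\alpha,\beta_1,\beta_2$ are ordinary (non-loop) arrows: killing every arrow other than $\alpha,\beta_1,\beta_2$ together with every path of length at least two except $\alpha\beta_1$ and $\alpha\beta_2$ produces an ideal $J\supseteq I$ for which $B=\kk Q/J$ is the path algebra of the Dynkin quiver~$D_4$ with the orientation $a\to b$, $b\to c_1$, $b\to c_2$. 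Here $\alpha\beta_1$ and $\alpha\beta_2$ survive in~$B$ because they lie in the different components $e_aAe_{c_1}$ and $e_aAe_{c_2}$ and are not in~$I$. Since $\kk D_4$ carries an almost split sequence whose middle term is a direct sum of three indecomposables, so does~$A$, and this contradicts the two-summand hypothesis.

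The main obstacle, and the step that actually consumes the finite representation type hypothesis, is the family of \emph{folded} configurations, in which some of $a,b,c_1,c_2$ coincide or one of $\alpha,\beta_1,\beta_2$ is a loop, so that $D_4$ no longer appears as a quotient. The plan here is to show that each such folding exhibits, as a subquotient of~$A$, either two parallel arrows (a Kronecker algebra) or a local algebra in which two radical generators have linearly independent nonzero products, the minimal example being $\kk\langle x,y\rangle/\rad^3$, and that every algebra of one of these shapes has infinite representation type; this time the contradiction is with the finiteness of the representation type rather than with the two-summand property. In other words, finite representation type is exactly what excludes the configurations that the $D_4$ argument cannot reach, and in all cases condition~\ref{ax:S2} must hold. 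I expect the delicate point throughout to be the bookkeeping needed to check that the chosen ideal~$J$ really contains~$I$ and that the surviving paths remain linearly independent modulo~$J$, so that the quotient is the intended model and not some further degeneration of it; it is precisely here that the admissibility of~$I$ and condition~\ref{ax:S1} are used.
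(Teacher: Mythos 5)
Your overall architecture — reduce to condition~\ref{ax:S2} via Proposition~\ref{prop:one}, pass to the opposite algebra to treat only one clause, and observe that both hypotheses are inherited by quotients of~$A$ — is sound, and it is genuinely different from the paper's argument. But the step on which everything rests, namely that the ideal~$J$ generated by the arrows other than $\alpha,\beta_1,\beta_2$ and by the paths of length two other than $\alpha\beta_1,\alpha\beta_2$ satisfies $J\supseteq I$, so that $\kk D_4$ is a quotient of~$A$, is not a bookkeeping matter: it is false in general, and the configurations where it fails are exactly where the proposition has real content. Admissibility gives $I\subseteq\rad^2\kk Q$, so modulo~$J$ every element of~$I$ reduces to a combination $c_1\alpha\beta_1+c_2\alpha\beta_2$, and nothing you have at that point forces these coefficients to vanish. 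Concretely, suppose $Q$ also has arrows $\gamma\colon a\to b'$ and $\delta\colon b'\to c_1$ and that $I$ contains the binomial $\alpha\beta_1-\gamma\delta$. This is compatible with admissibility, with condition~\ref{ax:S1}, and with $\alpha\beta_1,\alpha\beta_2\notin I$; yet $\alpha\beta_1-\gamma\delta\equiv\alpha\beta_1\pmod J$, so $I\not\subseteq J$. What killing the other arrows actually produces here is not $\kk D_4$ but (up to an isolated vertex) $\kk D_4/(\alpha\beta_1)$, a representation-finite \emph{string} algebra that satisfies the two-summand property, so no contradiction can be extracted from it. Your justification --- that $\alpha\beta_1$ and $\alpha\beta_2$ lie in different components and are not in~$I$ --- overlooks parallel paths through the arrows you kill: a path can fail to lie in~$I$ and still be congruent modulo~$I$ to something in~$J$. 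The same objection applies to the folded cases. Note that the analogous reductions you invoke for parallel arrows and loops are immune to this problem only because there one kills \emph{all} paths of length two, so that $J\supseteq\rad^2\kk Q\supseteq I$ automatically; your $D_4$ reduction must keep two specific length-two paths alive, which is precisely what admissibility cannot guarantee. Since the proof never uses the two hypotheses of the proposition to exclude such interfering relations, it does not go through as written, and excluding them is essentially as hard as the proposition itself.

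For contrast, the paper avoids excision entirely and works inside~$A$: since $A$ is representation-finite and its almost split sequences have middle terms with at most two direct factors, Theorem~4.6 of Auslander and Reiten on uniserial functors gives that the radical of every non-uniserial indecomposable projective is a \emph{direct} sum of two uniserial modules; on the other hand, a failure of~\ref{ax:S2} at~$\alpha$ makes $\alpha A\subseteq\rad e_aA$ a non-uniserial module with simple top, and the contradiction follows with no assumption on how $I$ interacts with the rest of~$Q$. To rescue your route you would have to add a second branch handling the case $I\not\subseteq J$, producing the violating extension from the interfering relation itself (in the example above, the commutative square inside~$A$ carries an almost split sequence whose middle term $P\oplus S_b\oplus S_{b'}$ has three factors); organizing that case analysis amounts to a different and substantially longer proof.
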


This gives us half of the Theorem~\ref{thm:main} stated in the
introduction.

\begin{proof}
Suppose that $A$ satisfies both hypotheses. In view of
Proposition~\ref{prop:one}, the presentation~$(Q,I)$ satisfies the
condition~\ref{ax:S1} and to prove that it is string it is enough to
that it also satisfies the condition~\ref{ax:S2}. Since every
extension between indecomposable $A$-modules has at most two direct
factors, we have in particular that the middle term of every almost
split exact sequence of $A$-modules has at most two direct factors
and, since $A$ has finite representation type, Theorem~4.6
in~\cite{AR:uniserial} tells us that the radical of every
non-uniserial indecomposable projective module is the \emph{direct}
sum of two uniserial modules. 

Suppose there are arrows~$\alpha$, $\beta$ and~$\gamma$ in~$Q$ such
that $\alpha$ ends where~$\beta$ and~$\gamma$ begin,
$\beta\neq\gamma$, and the paths $\alpha\beta$ and~$\alpha\gamma$ are
both not in~$I$, and let $i$ the source of~$\alpha$. As $A$ has finite
representation type, the quiver~$Q$ does not have parallel arrows, and
$\beta$ and~$\gamma$ end at different vertices. Moreover, the
indecomposable projective module $e_iA$ is not uniserial: if it were,
we would have $\dim\rad^ne_iP/\rad^{n+1}e_iP\leq1$ for all $n\in\NN_0$
and what we have tells us that the classes of~$\alpha\beta$
and~$\alpha\gamma$ are linearly independent elements
in~$\rad^2e_iA/\rad^3e_iA$. The right ideal $\alpha A$ is
indecomposable because it has a simple top, is not uniserial, and is
contained in~$\rad e_iA$: this is absurd, as we have seen that this
radical is a direct sum of two uniserial modules.
\end{proof}

\section{Degenerations and the number of direct factors of a module}
\label{sect:deg}

Let us recall from Bongartz's paper~\cite{Bongartz} the notion of
degeneration of modules, deferring to that paper and to
Riedtmann's~\cite{Riedtmann} for anything else on the subject.

Let $A$ be a finite-dimensional algebra over our algebraically closed
field~$\kk$. If~$V$ is a finite-dimensional vector space, then we
consider the set
  \[
  L_A(V)\coloneqq\hom(V\otimes A,V)
  \]
of all linear maps $\rho:V\otimes A\to V$ and its subset~$\Rep_A(V)$
consisting of those maps~$\rho$ that turn $V$ into an $A$-module
$V_\rho$. The set~$L_A(V)$ is an affine space over~$\kk$, and it is
easy to check that $\Rep_A(V)$ is an affine algebraic variety in it.
The group~$\GL(V)$ of linear automorphisms $V\to V$ acts naturally
on~$L_A(V)$ by conjugation and that action restricts to one
on~$\Rep_A(V)$. Two points $\rho$ and~$\rho'$ of~$\Rep_A(V)$ are in
the same $\GL(V)$-orbit if and only if there is an isomorphism
$V_\rho\cong V_{\rho'}$ of $A$-modules. In particular, a
finite-dimensional $A$-module~$M$ such that $\dim M=\dim V$ determines
a unique orbit $\O(M)$ in~$\Rep_A(V)$, the one consisting of the
points~$\rho$ such that there is an isomorphism of $A$-modules $M\cong
V_\rho$.

If $M$ and $N$ are two $A$-modules with $\dim M=\dim N=\dim V$, then
we say that $M$ \newterm{degenerates} to~$N$ if the orbit~$\O(N)$ is
contained in the Zariski closure of the orbit~$\O(M)$. 

\bigskip

If a module~$M$ degenerates to another module~$M$, then $M$ and~$N$
have the same dimension vectors, that is, they have the same
composition factors. On the other hand, there is in general very
little that one can say about the relation between the numbers of
their direct factors~$\rk{M}$ and~$\rk{N}$. A natural guess is that
the inequality $\rk{M}\leq\rk{N}$ should hold, but it does not:
Bongartz gives in~\cite{Bongartz}*{Example 7.1} examples that show
that over the self-injective $4$-dimensional algebra
$\kk[X,Y]/(X^2,XY,Y^2)$ modules with arbitrarily many direct summands
degenerate to indecomposable ones. For string algebras of finite
representation type a little miracle occurs, though, and the natural
guess is on the mark.

\begin{Proposition}\label{prop:deg}
Let $A$ be a string algebra of finite representation type and let $M$
and $N$ be two $A$-modules with the same dimension vector. If $M$
degenerates to~$N$, then $\rk{M}\leq\rk{N}$.
\end{Proposition}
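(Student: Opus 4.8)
The plan is to convert the degeneration $\O(N)\subseteq\overline{\O(M)}$ into Hom-inequalities and then recover the number of summands from Hom-dimensions by Auslander--Reiten theory. First I would invoke semicontinuity: for every $A$-module $Z$ the function $\rho\mapsto\dim\hom(Z,V_\rho)$ is upper semicontinuous on $\Rep_A(V)$ (see \cite{Bongartz} and \cite{Riedtmann}), so the hypothesis gives $\dim\hom(Z,M)\le\dim\hom(Z,N)$ for all $Z$; of course $\dim M=\dim N$ as well. Everything then reduces to writing $\rk M=\sum_Z m_Z(M)$, where $m_Z(M)$ is the multiplicity of the indecomposable $Z$ in $M$, as a combination of the numbers $\dim\hom(Z,M)$.

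The tool is the classical Auslander--Reiten multiplicity formula. If $Y$ is a non-injective indecomposable with almost split sequence $0\to Y\to F\to\tau^{-1}Y\to0$, then applying $\hom(\place,M)$ and using that $Y\to F$ is minimal left almost split (together with $\End(Y)/\rad\End(Y)=\kk$) gives $m_Y(M)=\dim\hom(Y,M)-\dim\hom(F,M)+\dim\hom(\tau^{-1}Y,M)$. If instead $Q$ is injective, the map $Q\to Q/\soc Q$ is minimal left almost split and the same computation yields $m_Q(M)=\dim\hom(Q,M)-\dim\hom(Q/\soc Q,M)$. Summing the first formula over all non-injective indecomposables and the second over all injective ones---here finite representation type guarantees that there are finitely many indecomposables and that $M$ is built from them---expresses $\rk M=\sum_Z w_Z\dim\hom(Z,M)$ with integer coefficients $w_Z$ that depend only on $A$.

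The heart of the matter is the bookkeeping of the $w_Z$. The two leading terms of the formulas contribute $+1$ to $w_Z$ for every $Z$ (since $Z$ is either injective or not), with an extra $+1$ exactly when $Z$ is non-projective (arising as some $\tau^{-1}Y$). The subtracted middle terms remove, for each $Z$, the multiplicity of $Z$ in all these codomains taken together; since the codomain of the left almost split map out of $S$ has the out-neighbours of $S$ as its summands, this total is exactly the number of arrows of the Auslander--Reiten quiver ending at $Z$, that is, the in-degree $\delta^-(Z)$. Hence $w_Z=1+[\,Z\text{ non-projective}\,]-\delta^-(Z)$. Two features now save the day. For $Z$ projective the coefficient may be negative, but $\dim\hom(e_iA,M)=\dim Me_i$ is determined by the dimension vector alone, so these terms take the same value on $M$ and on $N$. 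For $Z$ non-projective, $\delta^-(Z)$ is the number of summands of the middle term $E_Z$ of the almost split sequence ending at $Z$, and this is where the string hypothesis enters: that middle term has at most two summands, so $w_Z=2-\rk{E_Z}\ge0$.

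Subtracting, the projective contributions cancel because $M$ and $N$ have the same dimension vector, and what remains is
\[
\rk N-\rk M=\sum_{Z\text{ non-proj.}}(2-\rk{E_Z})\bigl(\dim\hom(Z,N)-\dim\hom(Z,M)\bigr),
\]
a sum of products of non-negative factors. The main obstacle is precisely the coefficient computation of the previous paragraph: one has to see that the negative coefficients occur only at the projective indecomposables, where the Hom-dimension is a dimension-vector invariant and hence inert under a dimension-preserving degeneration, while at every non-projective indecomposable the coefficient equals $2-\rk{E_Z}$, which the defining two-summand property of string algebras forces to be non-negative.
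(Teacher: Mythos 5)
Your proof is correct, and it arrives at exactly the same key identity as the paper, namely
\[
\rk{N}-\rk{M}
=\sum_{Z\ \text{non-projective}}\bigl(2-\rk{E_Z}\bigr)\,\delta(Z),
\qquad
\delta(Z)=\dim\hom(Z,N)-\dim\hom(Z,M),
\]
but by a different mechanism. The paper forms the direct sum $0\to X\to Y\to Z\to 0$ of the almost split sequences ending at the non-projective indecomposables, each taken with multiplicity $\delta(V)$, cites the isomorphism $M\oplus X\oplus Z\cong N\oplus Y$ established in the proof of Riedtmann's Theorem~1.1, and then counts summands using Krull--Remak--Schmidt; you instead write $\rk{M}=\sum_Z w_Z\dim\hom(Z,M)$ directly, via the classical multiplicity formulas attached to minimal left almost split maps, and do the coefficient bookkeeping by hand, getting $w_Z=1+[Z\ \text{non-projective}]-\delta^-(Z)$, hence $w_Z=2-\rk{E_Z}$ at non-projectives and the possibly negative but harmless $1-\rk{\rad Z}$ at projectives. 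The two arguments are close kin --- Riedtmann's isomorphism is itself proved by just this sort of Hom-dimension counting combined with Auslander's theorem that Hom-dimensions from indecomposables determine a module --- but yours is more self-contained: it needs only the semicontinuity statement (Riedtmann's Proposition~2.1), not her Theorem~1.1, at the price of explicit bookkeeping, and it makes visible exactly where negative coefficients could occur (only at projectives, where equality of dimension vectors neutralizes them). Both proofs rest on the same three pillars: $\delta\geq0$ by semicontinuity with $\delta=0$ at projectives, finite representation type to keep all sums finite, and the bound $\rk{E_Z}\leq2$. One small correction on the last point: that bound is the Butler--Ringel theorem about almost split sequences over string algebras, not the ``defining property'' of string algebras as your final sentence suggests --- indeed, the whole point of the paper's Theorem~A is that the analogous property for \emph{all} extensions is a characterization, which is far from the definition.
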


This is the Theorem~\ref{thm:deg:intro} stated in the introduction.

\begin{proof}
Let us suppose that $M$ degenerates to~$N$ and for each $A$-module $V$
let us write 
  \[
  \delta(V) \coloneqq \dim\hom_A(V,N)-\dim\hom_A(V,M).
  \]
According to \cite{Riedtmann}*{Proposition 2.1} we have
$\delta(V)\geq0$ for all $A$-modules~$V$ and, since $M$ and $N$ have
the same dimension vector, we have $\delta(V)=0$ if $V$ is projective
and indecomposable. For each non-projective indecomposable $A$-module
$V$ let $\Sigma_V$ be the Auslander--Reiten sequence starting at~$V$,
  \[
  \begin{tikzcd}
  0 \arrow[r]
    & \tau V \arrow[r]
    & E_V \arrow[r]
    & V \arrow[r]
    & 0
  \end{tikzcd}
  \]
and let us consider the short exact sequence
  \[
  \begin{tikzcd}
  0 \arrow[r]
    & X \arrow[r]
    & Y \arrow[r]
    & Z \arrow[r]
    & 0
  \end{tikzcd}
  \]
that is the direct sum $\bigoplus_{V}\Sigma_V^{\delta(V)}$ of the
short exact sequences~$\Sigma_V$ for all non-projective indecomposable
modules~$V$, each taken with multiplicity~$\delta(V)$. This direct sum
is finite, of course, because the algebra~$A$ has finite
representation type. In the proof of \cite{Riedtmann}*{Theorem 1.1} it
is established that we have 
  \[
  M\oplus X\oplus Z \cong N\oplus Y
  \]
and it follows from this and the Krull--Remak--Schmidt theorem that
  \begin{align*}
  \rk{N} - \rk{M} 
        =  \rk{X} + \rk{Z} - \rk{Y}
       &= \sum_V\delta(V)\bigl(\rk{\tau V}+\rk{V}-\rk{E_V}\bigr) \\
       &= \sum_V\delta(V)\bigl(2-\rk{E_V}\bigr).
  \end{align*}
Since we know that $\rk{E_V}\leq2$ for all~$V$ ---for example, from
the explicit construction done by Butler and Ringel in~\cite{BR} of
all the Auslander--Reiten sequences for~$A$--- we see at once that
$\rk{N}-\rk{M}\geq0$, which is precisely what the proposition claims.
\end{proof}

It may be helpful to the reader of~\cite{Riedtmann} to know that the
unpublished result of Auslander of which Riedtmann makes use ---that
two modules $M$ and~$N$ over an arbitrarily finite dimensional
algebra~$\Lambda$ are isomorphic as soon as we have
$\dim\hom_\Lambda(U,M)=\dim\hom_\Lambda(U,N)$ for all indecomposable
$\Lambda$-modules $U$--- follows easily from Lemma 1.2 in Bongartz's
paper~\cite{Bongartz}.

\bigskip

Let us state a useful special case of the proposition:

\begin{Corollary}\label{coro:two}
Let $A$ be a string algebra of finite representation type. If
  \[
  \begin{tikzcd}[column sep=1.25em]
  0 \arrow[r]
    & N \arrow[r]
    & E \arrow[r]
    & M \arrow[r]
    & 0
  \end{tikzcd}
  \]
is a short exact sequence of $A$-modules, then $\rk{E}\leq\rk{M}+\rk{N}$.
\end{Corollary}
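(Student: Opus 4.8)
The plan is to deduce the Corollary from Proposition~\ref{prop:deg} by exhibiting an appropriate degeneration. The key observation is that any short exact sequence
\[
\begin{tikzcd}[column sep=1.25em]
0 \arrow[r] & N \arrow[r] & E \arrow[r] & M \arrow[r] & 0
\end{tikzcd}
\]
gives rise to a degeneration from the split object $M\oplus N$ to the middle term~$E$. Indeed, $E$ and $M\oplus N$ have the same dimension vector, since both have composition factors the union of those of~$M$ and~$N$, so they live in the same representation variety~$\Rep_A(V)$ for a suitable~$V$ with $\dim V=\dim E$.

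Concretely, I~would recall the standard fact---already implicit in the setup of Section~\ref{sect:deg}---that whenever there is a short exact sequence $0\to N\to E\to M\to0$, the split module $M\oplus N$ degenerates to~$E$. The cleanest way to see this is the one-parameter family argument: realizing $M\oplus N$ as a point $\rho_0\in\Rep_A(V)$ in block-triangular form with the extension class as the off-diagonal entry, one conjugates by the diagonal automorphism $t\mapsto\operatorname{diag}(t\cdot\mathrm{id}_N,\mathrm{id}_M)$ (or its inverse), obtaining a curve in the $\GL(V)$-orbit of~$E$ whose limit as $t\to0$ is the semisimplified block-diagonal point representing~$M\oplus N$. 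Hence $\O(M\oplus N)$ lies in the Zariski closure of $\O(E)$, which is exactly the statement that $E$ degenerates to $M\oplus N$. Thus it is $E$ that degenerates to the split module, not the other way around.

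Applying Proposition~\ref{prop:deg} to this degeneration---with $E$ playing the role of the degenerating module and $M\oplus N$ the degenerate one---I~get $\rk{E}\leq\rk{M\oplus N}$. Since the number of indecomposable summands is additive over direct sums by the Krull--Remak--Schmidt theorem, $\rk{M\oplus N}=\rk{M}+\rk{N}$, and the desired inequality $\rk{E}\leq\rk{M}+\rk{N}$ follows immediately.

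The main point to get right is the \emph{direction} of the degeneration and the correct inequality: Proposition~\ref{prop:deg} says that degenerating \emph{increases} the number of direct factors, and since a non-split extension $E$ degenerates to the split module $M\oplus N$, the split module has at least as many summands as~$E$---which is precisely what we want. I~expect no serious obstacle here beyond correctly invoking the classical fact that an extension degenerates to the corresponding split sequence; everything else is a routine application of Proposition~\ref{prop:deg} together with additivity of~$\rk{\place}$.
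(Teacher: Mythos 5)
Your proof is correct and follows essentially the same route as the paper: the paper likewise deduces the corollary from Proposition~\ref{prop:deg} via the classical fact that $E$ degenerates to $M\oplus N$, merely citing Lemma~1.1 of Bongartz (or the lemma in the proof of Riedtmann's Corollary~2.3) for that fact instead of running the one-parameter-subgroup argument inline as you do. One caution: your opening sentences state the degeneration in the wrong direction (``$M\oplus N$ degenerates to~$E$'') and mislabel the block-triangular point with the extension class as representing $M\oplus N$ rather than~$E$, but your subsequent self-correction and the actual application of Proposition~\ref{prop:deg} use the correct direction, so the argument as finally executed is sound.
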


\begin{proof}
It follows from Lemma~1.1 in~\cite{Bongartz} or the lemma that appears
in the proof of Corollary~2.3 in~\cite{Riedtmann} that in the
situation of the lemma the module~$E$ degenerates to~$M\oplus N$, so
the corollary is a direct consequence of the proposition.
\end{proof}

This corollary gives us the half of Theorem~\ref{thm:main} that we did
not prove in the previous section.

\begin{proof}[Proof of Theorem~\ref{thm:main}]
Proposition~\ref{prop:one} tells us that an algebra that satisfies the
condition in the statement of the theorem is a string algebra.
Conversely, Corollary~\ref{coro:two} above immediately implies us that
a string algebra of finite representation type satisfies that
condition.
\end{proof}

Th.\,Brüstle, G.\,Douville, K.\,Mousand, H.\,Thomas and E.\,Yıldırım
in~\cite{BDMTY} and \.{I}.\,\c{C}anak\c{c}\i, D. Pauksztello and
S.~\,Schroll in~\cite{CPS} have constructed bases for the extension
spaces~$\Ext^1_A(M,N)$ corresponding to all choices of indecomposable
modules~$M$ and~$N$ over a \emph{gentle} algebra $A$ of finite
representation type, and the middle terms of the extensions
representing the elements of those bases all have at most two direct
factors --- as they must, according to Corollary~\ref{coro:two} above,
since gentle algebras are string algebras. It should be noted that it
does not follow from this that \emph{all} extensions between
indecomposable modules have middle terms with at most two factors,
though. For example, if we write modules over the path algebra~$\kk Q$
of the quiver~$Q$
  \[
  \begin{tikzpicture}[thick, -{Stealth[]}, scale=1]
  \foreach \i/\p in {1/{(0,0)}, 2/{(-1,0)}, 3/{(0,1)}, 4/{(1,0)}}
    \fill \p circle(2pt) node[name=\i] {};
  \draw (2) -- (1);
  \draw (3) -- (1);
  \draw (4) -- (1);
  \end{tikzpicture}
  \]
using dimension vectors, it turns out that
the vector space 
  \[
  V\coloneqq\Ext^1_{\kk Q}(
  \begin{tikzpicture}[xscale=0.2, yscale=0.25, 
        font=\footnotesize, baseline={(0,0)}]
  \node at (0,0) {2};
  \node at (-1,0) {1};
  \node at (0,1) {1};
  \node at (+1,0) {1};
  \end{tikzpicture}
  ,
  \begin{tikzpicture}[xscale=0.2, yscale=0.25, 
        font=\footnotesize, baseline={(0,0)}]
  \node at (0,0) {1};
  \node at (-1,0) {0};
  \node at (0,1) {0};
  \node at (+1,0) {0};
  \end{tikzpicture}
  )
  \]
has dimension~$2$, that there are in it three $1$-dimensional
subspaces whose elements correspond to extensions with two direct
factors in the middle, and that all the other elements of~$V$
correspond to extensions with three direct factors in the middle. This
example and many others lead to considerations of geometric nature
and, eventually, to the degeneration argument that we have used.

\begin{Remark}
The proof of Proposition~\ref{prop:deg} shows that over an algebra of
finite representation type if a module~$M$ degenerates to another
module~$N$, then $\rk{M}-\rk{N}$ can be estimated from the knowledge
of the number of middle terms of Auslander--Reiten short exact
sequences and the function~$\delta$. It would be interesting to know
if this observation can be put to use.
\end{Remark}

\section{String algebras of infinite representation type}
\label{sect:bands}

In this section we will study string algebras of infinite
representation type and to be able to talk about them we start by
recalling as little as possible of the language of bands --- for
convenience we will use the version presented in~\cite{R:compact} and
\cite{R:generic}, because our constructions will depend on results
presented there using that version of the language.

Let $(Q,I)$ be a string presentation. Let us denote by $Q_0$ and $Q_1$
the sets of vertices and arrows of~$Q$, respectively, and by
$s$,~$t:Q_1\to Q_0$ the functions mapping each arrow to its source and
its target, respectively.

Let $\hat Q$ be the quiver obtained from~$Q$ by adding for each
arrow~$\alpha$ in~$Q_1$ a new arrow~$\alpha^{-1}$ going in the
opposite direction, so that $s(\alpha^{-1})=t(\alpha)$ and
$t(\alpha^{-1})=s(\alpha)$. We call the arrows of~$\hat Q$
\newterm{letters}, and a letter is \newterm{direct} or
\newterm{inverse} according to whether it is in~$Q_1$ or not. There is
an involution $l\in\hat Q_1\mapsto l^{-1}\in\hat Q_1$ mapping each
direct letter~$\alpha$ to its formal inverse~$\alpha^{-1}$, which we
call \newterm{inversion}.

A \newterm{walk} in~$Q$ is a path in~$\hat Q$, possibly one of those
of length zero that correspond to the vertices of~$Q$. The
\newterm{inverse} of a walk $w=l_1\cdots l_r$ of positive length is
the walk $w^{-1}\coloneqq l_r^{-1}\cdots l_1^{-1}$, while the each
walk of length zero is its own inverse. A walk is \newterm{direct} or
\newterm{inverse} if all its letters are direct or inverse,
respectively, and it is \newterm{serial} if it is either direct or
inverse. There is an obvious extension of the source and target
functions~$s$ and~$t$ to the set of all walks in~$Q$, and if $u$ and
$v$ are walks such that $t(u)=s(v)$ we can construct a new walk $uv$
simply by concatenation; this partially defined operation on the set
of walks is associative in the appropriate sense. A walk~$u$ is a
\newterm{factor} of a walk~$w$ if there are walks~$v_1$ and~$v_2$ such
that the product $v_1uv_2$ is defined and equal to~$w$, and a
\newterm{prefix} if we can choose~$v_1$ of length zero. In this last
case we write $u\sqsubseteq w$.

A walk~$w$ is a \newterm{word} if \emph{(i)} $w$ has no factor of the
form $ll^{-1}$ with $l$ a letter and \emph{(ii)} neither $w$
nor~$w^{-1}$ has a factor that belongs to~$I$. A word~$w$ is
\newterm{cyclic} if it is not serial and $w^2$ is also a word, and a
cyclic word is \newterm{primitive} if it is not of the form~$v^r$ for
some cyclic word~$v$ and some integer~$r\geq2$.

Let $A\coloneqq\kk Q/I$ be the algebra presented by~$(Q,I)$. From a
word~$w$ we can construct a $A$-module $M(w)$, called the
\newterm{string module} corresponding to~$w$, and from a primitive
cyclic word~$w$, a positive integer~$n$ and a non-zero
scalar~$\lambda\in\kk\setminus0$ we can construct an $A$-module
$B(w,\lambda,n)$, called the \newterm{band module} corresponding to
those parameters. A well-known result ---due essentially to Gel\cprime
fand and Ponomarev \cite{GP}--- states that in this way we obtain, up
to isomorphism, all the indecomposable $A$-modules, and each of them
once except for easily controlled repetitions: if $w$ is a word, then
the string modules obtained from~$w$ and from~$w^{-1}$ are
isomorphic, and if $w$ is a primitive cyclic word then the band
modules obtained from~$w$ and from its «rotations» and their
inverses are all isomorphic.

\bigskip

As usual, we say that an algebra~$\Lambda$ has \newterm{tame
representation type} if it is not of finite representation type and
for each $d\in\NN$ there exist finitely many
$(\kk[X],\Lambda)$-bimodules $M_1$,~\dots,~$M_t$ such that, up to
isomorphism, all but finitely many indecomposable $\Lambda$-modules of
dimension~$d$ are isomorphic to a $\Lambda$-module of the form
$\kk[X]/(X-\lambda)\otimes_{\kk[X]}M_i$ for some~$\lambda\in\kk$ and
some $i\in\{1,\dots,t\}$. When this is the case, we write $\mu(d)$ for
the minimal number of such bimodules needed for this. If there exists
an integer~$N$ such that $\mu(d)\leq N$ for all $d\in\NN$ then the
algebra~$\Lambda$ has \newterm{domestic representation type}.

The result of Gel\cprime fand and Ponomarev mentioned above implies
that string algebras are either of finite representation type or of
tame representation type, and it is easy to decide which among the
latter are domestic. If $\alpha$ is an arrow in~$Q$, we
denote~$\N(\alpha)$ the set of all cyclic words starting with~$\alpha$
and ending with an inverse letter --- which may well be empty, of
course. This set~$\N(\alpha)$ is closed under concatenation, so a
semigroup, and, in fact, easily seen to be a free one, freely
generated by its subset $\N(\alpha)\setminus\N(\alpha)^2$. It follows
from \cite{R:compact}*{Proposition 2} and observations made in the
introduction of~\cite{R:generic} that the following holds:

\begin{Proposition}
Let $(Q,I)$ be a string presentation. The following three conditions
are equivalent:
\begin{tfae}

\item The algebra $A\coloneqq\kk Q/I$ does not have domestic
representation type.

\item There are infinitely many primitive cyclic words in~$(Q,I)$.

\item There is an arrow~$\alpha$ in~$Q$ such that the
semigroup~$\N(\alpha)$ is neither empty nor cyclic.  \qed

\end{tfae}
\end{Proposition}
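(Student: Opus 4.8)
The plan is to prove the equivalence of the three conditions by establishing a cycle of implications, drawing on the structural description of the representation type of string algebras in terms of band modules. I would orient the whole argument around the fact, due ultimately to Gel\cprime fand and Ponomarev and made explicit by Ringel, that the band modules $B(w,\lambda,n)$ associated to a fixed primitive cyclic word~$w$ form a one-parameter family as $\lambda$ ranges over $\kk\setminus0$, and that distinct primitive cyclic words give genuinely distinct (non-equivalent) families. With this in hand the statement becomes a translation exercise between the analytic notion of domesticity captured by $\mu(d)$ and the combinatorial notion of there being infinitely many primitive cyclic words, together with a further combinatorial reduction to the semigroups $\N(\alpha)$.

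\medskip

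\noindent\emph{\tfaeitem{b} $\Leftrightarrow$ \tfaeitem{a}.} First I would show the equivalence of \tfaeitem{a} and \tfaeitem{b} directly from the parametrization of indecomposables. If there are only finitely many primitive cyclic words, then for each dimension~$d$ only finitely many of them can contribute band modules of dimension~$d$, and each contributes a single one-parameter family; since string modules are discrete (they contribute no parameters), a fixed finite list of $(\kk[X],A)$-bimodules --- one per primitive cyclic word, built from the band construction --- suffices to describe all but finitely many indecomposables of each dimension, so $\mu(d)$ is bounded and $A$ is domestic. Conversely, if there are infinitely many primitive cyclic words, then since distinct primitive cyclic words yield inequivalent one-parameter families, the number $\mu(d)$ must grow without bound as the minimal dimension of the bands forces more and more families to appear, so $A$ is non-domestic. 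The honest content here is exactly the input I am told to assume, namely that \cite{R:compact}*{Proposition~2} and the introduction of~\cite{R:generic} provide the precise bookkeeping matching families of band modules to the parameter $\mu(d)$.

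\medskip

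\noindent\emph{\tfaeitem{b} $\Leftrightarrow$ \tfaeitem{c}.} The remaining equivalence is purely combinatorial and is where I expect the real work to lie. The point is that $\N(\alpha)$ being free on $\N(\alpha)\setminus\N(\alpha)^2$ means it is either empty, or infinite cyclic (free on one generator), or contains at least two distinct generators and hence infinitely many pairwise distinct elements that are not powers of a common word. For the implication \tfaeitem{c} $\Rightarrow$ \tfaeitem{b}, I would take a non-empty non-cyclic $\N(\alpha)$, extract two distinct generators $u,v$, and observe that the words $u, uv, uvv, \dots$ (or more carefully the distinct Lyndon-type representatives among products of $u$ and $v$) are cyclic and give infinitely many \emph{primitive} cyclic words --- the primitivity requiring a short argument that a product of two distinct free generators cannot be a proper power. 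For the converse \tfaeitem{b} $\Rightarrow$ \tfaeitem{c}, I would argue contrapositively: if every $\N(\alpha)$ is empty or cyclic, then every cyclic word is, up to rotation, a power of one of finitely many fixed generators (one per arrow, each $\N(\alpha)$ contributing at most one primitive word), whence there are only finitely many primitive cyclic words. Tying the normalization «starting with~$\alpha$ and ending with an inverse letter» to the rotation-equivalence on cyclic words, so that each rotation class has a canonical representative in exactly one $\N(\alpha)$, is the bookkeeping I would need to get right.

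\medskip

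\noindent The main obstacle, I expect, is the freeness and primitivity bookkeeping in \tfaeitem{b} $\Leftrightarrow$ \tfaeitem{c}: one must verify that the canonical form «first letter~$\alpha$ direct, last letter inverse» really does pick out one representative per rotation class of cyclic words, that concatenation in $\N(\alpha)$ matches concatenation of cyclic words, and that freeness of $\N(\alpha)$ translates cleanly into non-existence of unexpected relations $w^2$ being a word. Everything else is either cited structure theory or a counting argument, but this translation between the free-semigroup description and the combinatorics of primitive cyclic words is the crux, and it is exactly the kind of statement that is «easily seen» to the author but requires care to pin down.
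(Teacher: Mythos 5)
Note first that the paper does not actually prove this proposition: it is stated with the end-of-proof sign already attached, and the only justification offered is the sentence preceding it, which attributes the result to \cite{R:compact}*{Proposition~2} together with observations made in the introduction of~\cite{R:generic}. Your proposal therefore attempts strictly more than the paper does, and your reconstruction is essentially sound. The equivalence \tfaeitem{2}$\Leftrightarrow$\tfaeitem{3}, which you rightly identify as the combinatorial crux, is completable exactly as you outline: a cyclic word is not serial, so it has a rotation beginning with a direct letter and ending with an inverse one, hence lying in $\N(\alpha)$ for some arrow~$\alpha$; if $w\in\N(\alpha)$ satisfies $w=h^r$ with $h$ a cyclic word and $r\geq2$, then $h$ itself begins with~$\alpha$ and ends with an inverse letter, so primitivity in the paper's sense coincides with not being a proper power inside the free semigroup~$\N(\alpha)$; an empty or cyclic $\N(\alpha)$ therefore contributes at most one primitive cyclic word up to rotation, while a free semigroup on two distinct generators $u\neq v$ contains the infinitely many primitive elements $uv^k$, $k\geq 0$ (if $uv^k=z^r$ with $r\geq2$, uniqueness of factorization into free generators forces $u=v$). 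For \tfaeitem{1}$\Leftrightarrow$\tfaeitem{2} you defer to the same sources the paper cites, which is fair, but one assertion of yours should be repaired: distinct primitive cyclic words do \emph{not} give inequivalent one-parameter families, since rotations and inverses of a primitive cyclic word produce isomorphic band modules --- the paper itself recalls this. The repair is harmless: rotation classes are finite, so infinitely many primitive cyclic words still yield infinitely many pairwise disjoint families, and if $d$ is a common multiple of the lengths of $k$ pairwise non-equivalent bands, then $\mu(d)\geq k$, using the standard fact that a single $(\kk[X],A)$-bimodule can meet at most one of several pairwise disjoint infinite families of $d$-dimensional indecomposables in infinitely many isomorphism classes (a constructibility argument on the affine line).
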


The other ingredient that we need in order to do what we want with
string algebras of non-domestic type is a nice result of H.J.\,Fine
and H.S.\,Wolf \cite{FW} on the combinatorics of words that we will
describe next; an excellent reference for this is the book~\cite{L} of
M.\,Lothaire, where the result we want appears as Proposition 1.3.5.
We fix a set~$\A$, which we call in this context the
\newterm{alphabet}, and write $\A^*$ for the free monoid generated
by~$\A$ ---in~\cite{L} the elements of~$\A^*$ are referred to
as~\emph{words}, but we will not do that since we are already using
that word (!) for something else. The \newterm{length} of an
element~$u$ of~$\A^*$ is what one expects, and we write it
as~$\abs{u}$. On the other hand, if~$u$,~$v$ and~$w$ are elements
of~$\A^*$, we say that $w$ is a common left factor of~$u$ and~$v$ if
there exist $u'$ and~$v'$ in~$\A^*$ such that~$u=wu'$ and $v=wv'$.

\begin{Proposition}\label{prop:words}\cite{L}*{Proposition 1.3.5}
Let $u$ and~$v$ be elements of~$\A^*$ of lengths~$n$ and~$m$, respectively,
and let $d\coloneqq\gcd(m,n)$. If two powers~$x^p$ and~$y^q$ of~$x$
and~$y$ have a common left factor of length at least equal to $n+m-d$,
then $x$ and~$y$ are powers of the same element of~$\A^*$. \qed
\end{Proposition}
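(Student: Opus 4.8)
The plan is to recast the statement as the periodicity theorem of Fine and Wolf and then prove the latter by a connectivity argument on positions. (I read the conclusion as being about the very words $x$ and $y$ named in the hypothesis on the powers $x^p$ and $y^q$, and set $n\coloneqq\abs{x}$, $m\coloneqq\abs{y}$ and $d\coloneqq\gcd(n,m)$.) Recall that a word has \emph{period} $r$ if any two of its positions at distance $r$ carry equal letters. First I would observe that a common left factor $z$ of $x^p$ and $y^q$ automatically has both $n$ and $m$ as periods, since a prefix of a power of $x$ has period $\abs{x}$ and likewise for $y$. Thus the whole proposition reduces to the following assertion, plus a short extraction at the end: a word $z$ with $\abs{z}\ge n+m-d$ that has periods $n$ and $m$ also has period $d$.

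To prove this periodicity assertion I would argue by connectivity. Index the positions of $z$ by $0,\dots,\abs{z}-1$ and put an edge between two positions whenever they are at distance $n$ or at distance $m$; since each edge forces the corresponding letters to agree, it is enough to show that any two positions congruent modulo $d$ lie in the same connected component. Fixing one residue class modulo $d$ and rescaling by $d$, this becomes a purely combinatorial claim about coprime step sizes: writing $a\coloneqq n/d$ and $b\coloneqq m/d$ with $\gcd(a,b)=1$ and $a\le b$, the graph on $\{0,1,\dots,a+b-2\}$ whose edges join positions differing by $a$ or by $b$ is connected. The number of positions available here is exactly what the bound $\abs{z}\ge n+m-d$ produces after rescaling.

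The connectivity claim I would establish by reducing to a single block. Each position in the upper range $\{b,\dots,a+b-2\}$ is joined by a $-b$ step to a position in $\{0,\dots,a-2\}\subseteq\{0,\dots,b-1\}$, so it suffices to connect the block $B\coloneqq\{0,\dots,b-1\}$. Within $B$ a $+a$ step either stays in $B$ or overshoots into the upper range, where a following $-b$ step pulls it back; together these moves realize the cyclic shift $i\mapsto(i+a)\bmod b$ for every $i\in\{0,\dots,b-2\}$. Since $\gcd(a,b)=1$ this shift is a single $b$-cycle, and the moves produce all of its edges except the one leaving the last position $b-1$, whose $+a$ step alone falls outside the allowed range; but a cycle with one edge deleted is still connected, so $B$ is connected and we are done. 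The hard part, and the whole point of the theorem, is exactly this boundary accounting: it is what makes the sharp constant $n+m-d$ work where a smaller one would fail, and it is the step I expect to be most delicate.

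Finally I would extract the conclusion. Once $z$ is known to have period $d$, and since $\abs{z}\ge n+m-d\ge\max(n,m)$, the length-$d$ prefix $t$ of $z$ controls all of $z$: reading off the first $n$ letters gives $x=t^{n/d}$, and reading off the first $m$ letters gives $y=t^{m/d}$. Hence $x$ and $y$ are both powers of the single word $t$, which is precisely what the proposition asserts.
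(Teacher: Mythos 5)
Your proof is correct, but be aware that the paper contains no argument to compare it with: this proposition is quoted, without proof, from Lothaire's book (where it is Proposition 1.3.5), i.e.\ it is the Fine--Wilf periodicity theorem, and the tombstone in the statement marks it as a black-box citation. (You also silently and correctly repaired the statement's typo: the ``$u$ and~$v$'' of the first sentence are the $x$ and~$y$ whose powers appear in the second.) So your proposal differs from the paper in the only way possible here --- it actually proves the result --- and the proof you give is sound and is essentially the classical one. The reduction is right: a common prefix $z$ of $x^p$ and $y^q$ has periods $n=\abs{x}$ and $m=\abs{y}$; each residue class modulo $d$ meets $\{0,\dots,n+m-d-1\}$ in exactly $a+b-1$ positions (with $a=n/d$, $b=m/d$ coprime), so after rescaling the assertion ``$z$ has period $d$'' becomes connectivity of the graph on $\{0,\dots,a+b-2\}$ with steps $a$ and~$b$. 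Your boundary accounting there is exactly correct: for $i\le b-2$ the $+a$ step stays inside the vertex set and, when it overshoots into $\{b,\dots,a+b-2\}$, a $-b$ step is available, so all edges of the $b$-cycle $i\mapsto(i+a)\bmod b$ are realized except the one leaving $b-1$, and a cycle minus one edge is still connected; the vertices of $\{b,\dots,a+b-2\}$ then hook into the block $\{0,\dots,b-1\}$ by a single $-b$ step. The final extraction, using $n+m-d\ge\max(n,m)$ to write $x=t^{n/d}$ and $y=t^{m/d}$ with $t$ the length-$d$ prefix of~$z$, is also fine. Two microscopic caveats, both harmless: the argument tacitly assumes $x$ and~$y$ nonempty (if one is empty the conclusion is trivially true, and the paper only applies the proposition to nonempty words in Section~\ref{sect:bands}), and when $a=b$ coprimality forces $a=b=1$, where the graph is a single vertex. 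What the citation buys the paper is brevity; what your argument buys is self-containedness, and it makes visible exactly where the sharp constant $n+m-d$ enters, namely as the count $a+b-1$ of positions per residue class needed to salvage connectivity after one edge of the cycle is lost at the boundary.
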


We have now everything in place to do the construction that we need to
prove the following result:

\begin{Proposition}
Let $(Q,I)$ be a string presentation and let~$A\coloneqq\kk Q/I$.
If~the algebra $A$ is not domestic, then there are extensions between
indecomposable $A$-modules whose middle term has an arbitrarily large
number of direct factors.
\end{Proposition}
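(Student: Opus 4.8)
The plan is to construct, for every positive integer~$n$, an explicit short exact sequence of $A$-modules whose two outer terms are indecomposable and whose middle term has at least~$n$ indecomposable direct summands; letting $n$ grow then proves the statement. Note that this is the exact opposite of the bound $\rk{E}\le\rk{M}+\rk{N}$ of Corollary~\ref{coro:two}, which of course is unavailable here precisely because $A$ is not of finite representation type.

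First I would feed the hypothesis into the characterization of non-domesticity established just above: since $A$ is not domestic there is an arrow~$\alpha$ for which the semigroup $\N(\alpha)$ is neither empty nor cyclic. As $\N(\alpha)$ is free, not being cyclic means it has rank at least two, so I can fix two distinct generators $u,v\in\N(\alpha)$. Each of them is a cyclic word that begins with the direct letter~$\alpha$ and ends with an inverse letter, and, being cyclic, each is a closed walk based at the single vertex $p\coloneqq s(\alpha)$; because at a seam the letter entering~$p$ is inverse while the letter leaving it is the direct letter~$\alpha$, the vertex~$p$ is a source of the underlying representation, that is, a position at which the top of the associated string module is concentrated. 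Since $\N(\alpha)$ is closed under concatenation, every product of copies of~$u$ and~$v$ is again a word, so I may splice these blocks together freely without ever creating a forbidden factor $ll^{-1}$ or a factor lying in~$I$.

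Next I would assemble the long string. Concatenating $n$ blocks produces a word~$W$ whose $n-1$ interior seams are all copies of the source vertex~$p$, and the associated string module $M(W)$ is indecomposable, obtained from the $n$ block-modules by gluing consecutive ones along a shared copy of~$p$ in the top. The module I want in the middle is the collection of the $n$ \emph{unglued} blocks, and the extra copies of~$p$ needed to separate them must be supplied by a second, transverse string built from the other generator: using~$v$, which is genuinely independent of~$u$, I would produce an indecomposable connecting string~$L$ meeting $M(W)$ exactly along the seam vertices. Writing the evident inclusion and projection then exhibits a short exact sequence $0\to M(W)\to E\to L\to 0$ (or its mirror image) in which $E$ splits off the $n$ blocks, so $\rk{E}\ge n$, while the two outer terms $M(W)$ and~$L$ are string modules and hence indecomposable.

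The hard part is to make this gluing genuinely work, and this is exactly where Proposition~\ref{prop:words} of Fine and Wolf enters. What must be ruled out is any accidental overlap between the $u$-blocks and the $v$-connector: such an overlap could introduce a forbidden cancellation, render one of the outer terms decomposable, or, worst of all, collapse the seams so that the middle term fails to separate into $n$ pieces. Each of these degeneracies would force powers of~$u$ and~$v$ to share arbitrarily long common left factors; but $u$ and~$v$ generate a \emph{free} semigroup of rank two and so are not powers of a common walk, whence Proposition~\ref{prop:words} bounds every common left factor of a power of~$u$ and a power of~$v$ in length by $\abs{u}+\abs{v}-\gcd(\abs{u},\abs{v})$. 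Passing if necessary to suitable powers of~$u$ and~$v$ makes every overlap shorter than a single block, so the seams stay disjoint, the words stay aperiodic, and the decomposition of~$E$ into at least~$n$ summands is forced. This overlap control, rather than any homological algebra, is the real obstacle, and it is why the rank of $\N(\alpha)$ being at least two --- that is, non-domesticity --- is precisely the hypothesis that makes the construction possible.
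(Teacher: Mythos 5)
Your starting point is fine and even genuinely different from the paper's: you invoke the equivalence between non-domesticity and the existence of an arrow $\alpha$ with $\N(\alpha)$ free of rank at least two, whereas the paper starts instead from Ringel's Lemma~3, which supplies words $x$, $y$, $z$ with carefully controlled direct/inverse first and last letters. The gap comes right after: the extension is never actually constructed. You do not say what the module $E$ is, what the ``connecting string'' $L$ is, what the maps are, or why the sequence is exact; the phrases ``the evident inclusion and projection'' and ``$E$ splits off the $n$ blocks'' stand in for the entire mathematical content of the theorem. Moreover, the naive reading of your construction is impossible. If the middle term of $0\to M(W)\to E\to L\to 0$ with $W=u^n$ really were the $n$ unglued blocks $\bigoplus_{i=1}^{n}M(u)$, then counting dimensions gives $\dim L=n-1$, and comparing dimension \emph{vectors} shows that $L$ is supported entirely at the seam vertex $p$; unless $Q$ happens to have a loop at~$p$, this forces $L\cong S_p^{n-1}$, which is decomposable for $n\ge 3$, contradicting the requirement that both outer terms be indecomposable. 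The refined reading, in which $L$ is a $v$-string donating its seam vectors, is precisely where a concrete module structure and a decomposition proof are required, and neither is given.

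The deeper problem is your implicit assumption that decomposability of the middle term is a ``visible,'' combinatorial matter that can be secured by keeping seams disjoint; it is not, and Fine--Wolf overlap control does not address it. (Indeed, two of the three degeneracies you invoke Fine--Wolf against are vacuous: products of elements of $\N(\alpha)$ are automatically words, and string modules are indecomposable no matter how their underlying words overlap.) The paper's proof shows what is really needed: the outer terms are \emph{band} modules $B(u)$ and $B(v)$ on cyclic words chosen so that $uv=(xyxz)^p$ is a proper $p$-th power, with $p$ a prime different from the characteristic of~$\kk$; Proposition~\ref{prop:words} is used there for a precise purpose, namely to prove that $u$ and $v$ themselves are \emph{primitive}, so that $B(u)$ and $B(v)$ are honest band modules and hence indecomposable; and the middle term is then identified with the band-type module on the non-primitive word $(xyxz)^p$ with parameter $-1$, which decomposes into at least $p$ summands by linear algebra --- it carries a rotational symmetry of order $p$, and the $p$-th roots of $-1$ are distinct because $p$ is prime to the characteristic. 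Nothing in your proposal plays the role of this last step, and the fact that the characteristic of $\kk$ never enters your argument is a symptom that the decomposition mechanism is missing. The paper's own example over the three-arrow quiver makes the same point: there, the combinatorially natural extensions have middle terms with \emph{two} summands, and it is the generic linear combinations, whose splitting is invisible, that have three. To repair your proof you would either have to carry out the band-module gluing as the paper does, or genuinely define your cross-over extension and prove its middle term splits --- which comes down to the same linear-algebra argument.
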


\begin{proof}
Let us suppose that the algebra~$A$ is not domestic. Lemma~3
in~\cite{R:generic} tells us that there are non-serial words~$x$,~$y$
and~$z$ such that $yxy$ and~$zxz$ are words, the first and last
letters of~$y$ are direct, and the first and last letters of~$z$ are
inverse. Let us notice first that
  \begin{equation}\label{eq:xyxz}
  \claim{$xy$ and $xz$ are not powers of the same word.}
  \end{equation}
Indeed, the last letter of $xy$ is direct, while that of $xz$ is
inverse.

Next, let us check that
  \begin{equation} \label{eq:xp}
  \claim[.7]{for all $n\geq3$ the words $(xyxz)^nxy$ and $xz(xyxz)^n$ are
  cyclic and primitive.}
  \end{equation}
Suppose, for example, that $n$ is a positive integer such that
$(xyxz)^nxy=w^t$ for some primitive cyclic word~$w$ and some
integer~$t$ with $t\geq2$. We claim that $xyxz$ is not a power of~$w$.
Indeed, if we had that $xyxz=w^s$, then it would follow that
$w^t=(xyxz)^nxy=w^{sn}xy$, so that $t\geq sn$ and $xy=w^{t-sn}$: but
then we would also have that $w^s=xyxz=w^{t-sn}xz$, so that $s\geq
t-sn$ and $xz=w^{(n+1)s-t}$, allowing us to conclude that $xz$ and
$xy$ are powers of~$w$, in~contradiction to~\eqref{eq:xyxz}. Our claim
thus holds. Now the words $(xyxz)^{n+1}$ and $w^t$ have $(xyxz)^n$ as
a common left factor (in the free monoid on the set of letters) and we
have shown that $xyxz$ and $w$ are not powers of a word: according to
Proposition~\ref{prop:words}, then, we have that $n\abs{xyzx} <
\abs{xyxz} + \abs{w}$, so that
  \[
  t(n-1)\abs{xyxz} < t\abs{w} = n\abs{xyxz} + \abs{xy}
        < (n+1)\abs{xyxz}
  \]
and $0 < 2-(t-1)(n-1)$. We thus see that $2>(t-1)(n-1)\geq n-1$ and,
therefore, that $n<3$. This proves~\eqref{eq:xp}.

Let now $p$ be a prime number different from the characteristic of our
ground field~$\kk$ and larger than~$7$, let $n$ be such that $p=2n+1$
and consider the two cyclic words 
  \[
  u = (xyxz)^nxy, 
  \qquad
  v = xz(xyxz)^n
  \]
which are primitive since $n\geq3$. Let $\gamma$ and~$\beta$ be the
first and last letters of~$y$, and $\alpha^{-1}$ and~$\delta^{-1}$ the
first and last letters of~$z$. Clearly $zxy$ is a factor of~$u$, so
that so is $\delta^{-1}x\gamma$. In fact, it has many such factors but
we choose one arbitrarily. Similarly, $yxz$ is a factor of~$v$, and
therefore so is $\beta x\alpha^{-1}$, and we fix an appearance of this
factor in~$v$. Since $x$ has positive length, the special biseriality
of~$(Q,I)$ implies at once that $\beta\delta\in I$ and
$\alpha\gamma\in I$.
  \[
  \begin{tikzpicture}[font=\footnotesize]
  \fill (0,0)  circle(2pt) coordinate (1)
        +(+120:1) circle(2pt) coordinate (1u)
        +(-120:1) circle(2pt) coordinate (1d)
        ++(3,0)  circle(2pt) coordinate (2)
        +(+60:1) circle(2pt) coordinate (2u)
        +(-60:1) circle(2pt) coordinate (2d)
        ;
  \draw[snake arrow] (1) -- node[above] {$x$} (2);
  \draw[quiver arrow] (1) -- node[anchor=south west] {$\delta$} (1u);
  \draw[quiver arrow] (1d) -- node[anchor=north west] {$\beta$} (1);
  \draw[quiver arrow] (2) -- node[anchor=south east] {$\gamma$} (2u);
  \draw[quiver arrow] (2d) -- node[anchor=north east] {$\alpha$} (2);
  \draw[dashed,thick] ($(2)+(2,0)+(200:1.5)$) arc (200:160:1.5);
  \draw[dashed, thick] ($(1)-(2,0)+(-20:1.5)$) arc (-20:20:1.5);
  \end{tikzpicture}
  \]

As $u$ and~$v$ are primitive cyclic words or, equivalently, bands, we
can construct from them the band modules $B(u)\coloneqq B(u,1,1)$ and
$B(v)\coloneqq B(v,1,1)$ with parameters~$1\in\kk$ and $1\in\NN$.
These are irreducible modules of dimensions equal to the lengths
of~$u$ and~$v$, respectively. The module~$B(u)$ has a basis
corresponding to the vertices the cyclic word~$u$ visits along its way
in~$Q$ and the same is true for~$B(v)$. We name~$i_1$ and~$i_2$ the
basis elements of~$B(v)$ which correspond to the source and the target
of the arrow~$\beta$ that appears in the factor~$\beta x\alpha^{-1}$
that we have chosen in~$v$. Similarly, we let $j_1$ and~$j_2$ be the
basis vectors of~$B(u)$ corresponding to the source and the target of
the arrow~$\delta$ that appears in the factor~$\delta^{-1}x\gamma$
that we have chosen in~$u$. The direct sum $M\coloneqq B(u)\oplus
B(v)$ can be schematically described by the following drawing ---in
which we ignore for now the dashed arrows.
  \[
  \begin{tikzpicture}[font=\footnotesize]
  \fill (0,1.5)  circle(2pt) coordinate (1v)
                node[anchor=north] {$i_2$}
        ++(4,0)  circle(2pt) coordinate (2v)
        +(+60:1) circle(2pt) coordinate (3v)
        (1v) + (180-60:1) circle(2pt) coordinate (4v)
                node[anchor=east] {$i_1$}
        ;
  \fill (0,0)  circle(2pt) coordinate (1u)
                node[anchor=south] {$j_1$}
        ++(4,0)  circle(2pt) coordinate (2u)
        +(-60:1) circle(2pt) coordinate (3u)
        (1u) + (180+60:1) circle(2pt) coordinate (4u)
                node[anchor=east] {$j_2$}
        ;
  \draw[quiver arrow] (4v) -- node[anchor=south west] {$\beta$} (1v);
  \draw[snake arrow] (1v) -- node[above] {$x$} (2v);
  \draw[quiver arrow] (3v) -- node[anchor=south east] {$\alpha$} (2v);
  \draw[snake arrow] (3v) .. controls +(+60:2.25) and +(180-60:2.25) .. (4v);
  \draw[->, thick] (2,2.75)+(30:0.5)  arc[radius=0.5, start angle=20, delta angle=320];
  \node[anchor=center] at (2,2.75) {$v$};
  \draw[quiver arrow] (1u) -- node[anchor=north west] {$\delta$} (4u);
  \draw[snake arrow] (1u) -- node[below] {$x$} (2u);
  \draw[quiver arrow] (2u) -- node[anchor=north east] {$\gamma$} (3u);
  \draw[snake arrow] (3u) .. controls +(-60:2.25) and +(180+60:2.25) .. (4u);
  \draw[->, thick] (2,-1.25)+(30:0.5)  arc[radius=0.5, start angle=20, delta angle=320];
  \node[anchor=center] at (2,-1.25) {$u$};
  \draw[quiver arrow, red, dashed] 
        (4v) .. controls +(-120:1) and +(+180:1.25) .. (1u);
  \draw[quiver arrow, blue, dashed] 
        (1v) .. controls +(200:1) and +(+100:1) .. (4u);
  \draw[{Bar[right]}-{Bar[left]}, very thick]  
        (5.5,1) -- node[right] {$B(v)$} +(0,3);
  \draw[{Bar[right]}-{Bar[left]}, very thick]  (5.5,-2.5) 
        -- node[right] {$B(u)$} +(0,3);
  \end{tikzpicture}
  \]
We now construct a $\kk Q$-module~$M'$. As a vector space, $M'$
coincides with~$M$. To give the $\kk Q$-module structure on~$M'$ it is
enough to describe how the vertices and the arrows of~$Q$ act on it,
and this is what we do.
\begin{itemize}

\item We let the vertices of~$Q$ act on~$M'$ as they act on~$M$.

\item If $\eta$ is an arrow in~$Q$ and $k$ is a basis vector of~$M'$
such that the pair $(\eta,k)$ is neither $(\beta,i_1)$
nor~$(\delta,i_2)$, we let $\eta$ act on~$k$ as it acts in~$M$.

\item Finally, we put $i_1\cdot\beta = i_2+j_1$ and $i_2\cdot\delta =
-j_2$.

\end{itemize}
If $\rho$ is a path in~$Q$ which belongs to~$I$, then the
straightforward consideration of the few possibilities that exist
shows that $\rho$ acts as zero on~$M'$: this means that $M'$ is an
$A$-module. Moreover, it is clear that we have an extension
  \[
  \begin{tikzcd}
  0 \arrow[r]
    & B(u) \arrow[r]
    & M' \arrow[r]
    & B(v) \arrow[r]
    & 0
  \end{tikzcd}
  \]
Now it is easy to see that the module $M'$ is isomorphic to the module
that can be constructed from the cyclic word~$uv$ just as band modules
are constructed using the parameters $\lambda=-1$ and $n=1$. It is
not, though, a band module: indeed, by construction we have that
$uv=(xyxz)^{2n+1}=(xyxz)^p$, so that $uv$ is \emph{not} a primitive
cyclic word. As it has a rotational symmetry of order~$p$ and $p$~is
coprime with the characteristic of our algebraically closed
field~$\kk$, the module $M'$ is a direct sum of at least~$p$
indecomposable direct summands: this proves the proposition.
\end{proof}

\begin{bibdiv}
\begin{biblist}


\bib{AR:uniserial}{article}{
   author={Auslander, Maurice},
   author={Reiten, Idun},
   title={Uniserial functors},
   conference={
      title={Representation theory, II},
      address={Proc. Second Internat. Conf., Carleton Univ., Ottawa, Ont.},
      date={1979},
   },
   book={
      series={Lecture Notes in Math.},
      volume={832},
      publisher={Springer, Berlin},
   },
   date={1980},
   pages={1--47},
   review={\MR{607147}},
}

\bib{Bongartz}{article}{
   author={Bongartz, Klaus},
   title={On degenerations and extensions of finite-dimensional modules},
   journal={Adv. Math.},
   volume={121},
   date={1996},
   number={2},
   pages={245--287},
   issn={0001-8708},
   review={\MR{1402728}},
   doi={10.1006/aima.1996.0053},
}

\bib{BDMTY}{article}{
  title={On the Combinatorics of Gentle Algebras}, 
  DOI={10.4153/S0008414X19000397}, 
  journal={Canadian Journal of Mathematics}, 
  publisher={Canadian Mathematical Society}, 
  author={Brüstle, Thomas},
  author={Douville, Guillaume},
  author={Mousavand, Kaveh},
  author={Thomas, Hugh},
  author={Yıldırım, Emine}, 
  date={2019}, 
  pages={1–30},
}

\bib{BR}{article}{
   author={Butler, M. C. R.},
   author={Ringel, Claus Michael},
   title={Auslander-Reiten sequences with few middle terms and applications
   to string algebras},
   journal={Comm. Algebra},
   volume={15},
   date={1987},
   number={1-2},
   pages={145--179},
   issn={0092-7872},
   review={\MR{876976}},
   doi={10.1080/00927878708823416},
}

\bib{CPS}{article}{
  author={\c{C}anak\c{c}\i, \.{I}lke},
  author={Pauksztello, David},
  author={Schroll, Sibylle},
  title={On extensions for gentle algebras},
  date={2017},
  eprint={https://arxiv.org/abs/1707.06934},
}


\bib{FW}{article}{
   author={Fine, N. J.},
   author={Wilf, H. S.},
   title={Uniqueness theorems for periodic functions},
   journal={Proc. Amer. Math. Soc.},
   volume={16},
   date={1965},
   pages={109--114},
   issn={0002-9939},
   review={\MR{174934}},
   doi={10.2307/2034009},
}

\bib{GP}{article}{
   author={Gel\cprime fand, I. M.},
   author={Ponomarev, V. A.},
   title={Indecomposable representations of the Lorentz group},
   language={Russian},
   journal={Uspehi Mat. Nauk},
   volume={23},
   date={1968},
   number={2 (140)},
   pages={3--60},
   issn={0042-1316},
   review={\MR{0229751}},
}


%
\bib{L}{book}{
   author={Lothaire, M.},
   title={Combinatorics on words},
   series={Cambridge Mathematical Library},
   note={With a foreword by Roger Lyndon and a preface by Dominique Perrin;
   Corrected reprint of the 1983 original, with a new preface by Perrin},
   publisher={Cambridge University Press, Cambridge},
   date={1997},
   pages={xviii+238},
   isbn={0-521-59924-5},
   review={\MR{1475463}},
   doi={10.1017/CBO9780511566097},
}

\bib{Riedtmann}{article}{
   author={Riedtmann, Christine},
   title={Degenerations for representations of quivers with relations},
   journal={Ann. Sci. \'{E}cole Norm. Sup. (4)},
   volume={19},
   date={1986},
   number={2},
   pages={275--301},
   issn={0012-9593},
   review={\MR{868301}},
}

\bib{R:compact}{article}{
   author={Ringel, Claus Michael},
   title={Some algebraically compact modules. I},
   conference={
      title={Abelian groups and modules},
      address={Padova},
      date={1994},
   },
   book={
      series={Math. Appl.},
      volume={343},
      publisher={Kluwer Acad. Publ., Dordrecht},
   },
   date={1995},
   pages={419--439},
   review={\MR{1378216}},
}

\bib{R:generic}{article}{
   author={Ringel, Claus Michael},
   title={On generic modules for string algebras},
   journal={Bol. Soc. Mat. Mexicana (3)},
   volume={7},
   date={2001},
   number={1},
   pages={85--97},
   issn={1405-213X},
   review={\MR{1831587}},
}

\bib{SW}{article}{
   author={Skowro\'{n}ski, Andrzej},
   author={Waschb\"{u}sch, Josef},
   title={Representation-finite biserial algebras},
   journal={J. Reine Angew. Math.},
   volume={345},
   date={1983},
   pages={172--181},
   issn={0075-4102},
   review={\MR{717892}},
}

\end{biblist}
\end{bibdiv}

\end{document}